\newtheorem{theorem}{Theorem}
\theoremstyle{plain}
\newtheorem{corollary}{Corollary}
\newtheorem{lemma}{Lemma}
\newtheorem{proposition}{Proposition}
\newtheorem{remark}{Remark}
\numberwithin{equation}{section}
\begin{document}
\title[Some Companions of Ostrowski type inequality]{Some Companions of
Ostrowski type inequality for functions whose second derivatives are convex
and concave with applications}
\author{M.Emin \"{O}zdemir$^{\blacklozenge }$}
\address{$^{\blacklozenge }$Atat\"{u}rk University, K.K. Education Faculty,
Department of Mathematics, Erzurum 25240, Turkey}
\email{emos@atauni.edu.tr}
\author{Merve Avci Ardic$^{\bigstar \diamondsuit }$}
\address{$^{\bigstar }$Adiyaman University, Faculty of Science and Arts,
Department of Mathematics, Adiyaman 02040, Turkey}
\email{mavci@posta.adiyaman.edu.tr}
\thanks{$^{\lozenge }$Corresponding Author}
\keywords{Convex function, Ostrowski inequality, Jensen integral inequality,
H\"{o}lder inequality.}

\begin{abstract}
In this paper, we obtain some companions of Ostrowski type inequality for
absolutely continuous functions whose second derivatives absolute value are
convex and concave. Finally, we gave some applications for special means.
\end{abstract}

\maketitle

\section{\protect\bigskip introduction}

The following inequality is well known as Ostrowski's inequality in the
literature \cite{O}:

\begin{theorem}
\label{teo 1.1} Let $f:I\subset 
%TCIMACRO{\U{211d} }%
%BeginExpansion
\mathbb{R}
%EndExpansion
\rightarrow 
%TCIMACRO{\U{211d} }%
%BeginExpansion
\mathbb{R}
%EndExpansion
$ be a differentiable mapping on $I^{\circ },$ the interior of the interval $%
I,$ such that $f^{\prime }\in L[a,b],$ where $a,b\in I$ with $a<b.$ If $%
\left\vert f^{\prime }(x)\right\vert \leq M,$ then the following inequality,%
\begin{equation*}
\left\vert f(x)-\frac{1}{b-a}\int_{a}^{b}f(x)dx\right\vert \leq M\left(
b-a\right) \left[ \frac{1}{4}+\frac{\left( x-\frac{a+b}{2}\right) ^{2}}{%
\left( b-a\right) ^{2}}\right]
\end{equation*}%
holds for all $x\in \lbrack a,b].$ The constant $\frac{1}{4}$ is the best
possible in the sense that it can not be replaced by a smaller constant.
\end{theorem}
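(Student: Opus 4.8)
The plan is to reduce the inequality to a single representation formula — the Montgomery identity — which rewrites the deviation $f(x)-\frac{1}{b-a}\int_a^b f(t)\,dt$ as a weighted integral of $f^{\prime }$, and then to estimate that integral crudely using the hypothesis $\left\vert f^{\prime }\right\vert \leq M$. To this end I would first introduce the Peano kernel
\[
p(x,t)=\begin{cases} t-a, & a\leq t\leq x,\\ t-b, & x<t\leq b,\end{cases}
\]
and show, by integrating $\int_a^x (t-a)f^{\prime }(t)\,dt$ and $\int_x^b (t-b)f^{\prime }(t)\,dt$ separately by parts, that
\[
\int_a^b p(x,t)f^{\prime }(t)\,dt=(b-a)f(x)-\int_a^b f(t)\,dt.
\]
Dividing by $b-a$ yields the Montgomery identity. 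This is the step that actually uses the hypotheses (differentiability and $f^{\prime }\in L[a,b]$), and some care is needed at the breakpoint $t=x$ so that the two integration-by-parts contributions combine into exactly the stated right-hand side.

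Next I would apply the triangle inequality for integrals together with $\left\vert f^{\prime }(t)\right\vert \leq M$ to obtain
\[
\left\vert f(x)-\frac{1}{b-a}\int_a^b f(t)\,dt\right\vert \leq \frac{M}{b-a}\int_a^b \left\vert p(x,t)\right\vert \,dt=\frac{M}{b-a}\left( \frac{(x-a)^2}{2}+\frac{(b-x)^2}{2}\right) ,
\]
where the final equality is the elementary evaluation of $\int_a^x (t-a)\,dt+\int_x^b (b-t)\,dt$. The rest is purely algebraic: substituting $u=x-\frac{a+b}{2}$, so that $x-a=\frac{b-a}{2}+u$ and $b-x=\frac{b-a}{2}-u$, gives $(x-a)^2+(b-x)^2=\frac{(b-a)^2}{2}+2u^2$, and dividing by $2(b-a)$ reproduces precisely $M(b-a)\left[ \frac{1}{4}+\frac{\left( x-\frac{a+b}{2}\right) ^2}{(b-a)^2}\right] $.

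I expect the genuinely delicate part to be the sharpness assertion rather than the inequality itself, since that is the only step which is not a routine estimate. To show $\frac{1}{4}$ cannot be lowered, I would suppose the inequality held with some $c<\frac{1}{4}$ in place of $\frac{1}{4}$ and test it against the linear function $f(t)=t$ (for which $f^{\prime }\equiv 1$, so $M=1$) at the endpoint $x=a$. There $f(a)-\frac{1}{b-a}\int_a^b t\,dt=a-\frac{a+b}{2}=-\frac{b-a}{2}$, while the proposed right-hand side equals $(b-a)\left( c+\frac{1}{4}\right) $, forcing $\frac{1}{2}\leq c+\frac{1}{4}$ and hence $c\geq \frac{1}{4}$, a contradiction. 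Constructing this extremizer is the main obstacle; the analytic core of the argument is entirely contained in the Montgomery identity, and everything after it is bookkeeping.
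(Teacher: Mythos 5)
Your proof is correct, but note that the paper itself contains no proof of this statement: Theorem \ref{teo 1.1} is the classical Ostrowski inequality, quoted as background and cited to \cite{O}, so there is no in-paper argument to compare against. What you give is the standard Montgomery-identity proof, and it is complete. The integration by parts on $[a,x]$ and $[x,b]$ with the kernel $p(x,t)$ does yield $\int_a^b p(x,t)f^{\prime}(t)\,dt=(b-a)f(x)-\int_a^b f(t)\,dt$ (the breakpoint causes no trouble: the boundary terms at $t=x$ are $(x-a)f(x)$ and $(b-x)f(x)$, which add up correctly, and since $\left\vert f^{\prime}\right\vert\leq M$ the function $f$ is Lipschitz, hence absolutely continuous, so the fundamental theorem of calculus and integration by parts are legitimate under the stated hypotheses). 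Your kernel evaluation $\int_a^b\left\vert p(x,t)\right\vert dt=\frac{(x-a)^2+(b-x)^2}{2}$ and the substitution $u=x-\frac{a+b}{2}$ giving $(x-a)^2+(b-x)^2=\frac{(b-a)^2}{2}+2u^2$ are both right, and they reproduce the stated bound exactly. The sharpness argument is also sound: for $f(t)=t$, $M=1$, $x=a$, both sides equal $\frac{b-a}{2}$, so equality is attained and no constant $c<\frac{1}{4}$ can work; contrary to your expectation that this would be the delicate part, the linear extremizer at the endpoint settles it in one line. One stylistic remark: in your displayed triangle-inequality step you wrote the conclusion with the kernel integral before observing it needs $f^{\prime}\in L[a,b]$ only through the pointwise bound $\left\vert f^{\prime}(t)\right\vert\leq M$; since $M$ bounds $\left\vert f^{\prime}\right\vert$ everywhere, the integrability hypothesis is in fact automatic here, which is worth saying if you want the proof self-contained.
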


In \cite{SSO}, Set et al. proved some inequalities for $s-$concave and
concave functions via following Lemma:

\begin{lemma}
\label{lem 1.0} Let $f:I\subseteq 
%TCIMACRO{\U{211d} }%
%BeginExpansion
\mathbb{R}
%EndExpansion
\rightarrow 
%TCIMACRO{\U{211d} }%
%BeginExpansion
\mathbb{R}
%EndExpansion
$ be a twice differentiable function on $I^{\circ }$ with $f^{\prime \prime
}\in L_{1}[a,b],$ then 
\begin{eqnarray*}
&&\frac{1}{b-a}\int_{a}^{b}f(u)du-f(x)+\left( x-\frac{a+b}{2}\right)
f^{\prime }(x) \\
&=&\frac{\left( x-a\right) ^{3}}{2\left( b-a\right) }\int_{0}^{1}t^{2}f^{%
\prime \prime }\left( tx+\left( 1-t\right) a\right) dt+\frac{\left(
b-x\right) ^{3}}{2\left( b-a\right) }\int_{0}^{1}t^{2}f^{\prime \prime
}\left( tx+\left( 1-t\right) b\right) dt.
\end{eqnarray*}
\end{lemma}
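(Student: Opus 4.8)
The plan is to verify the identity by starting from the right-hand side and reducing each of its two integrals to an integral of $f$ itself, using integration by parts twice followed by a linear change of variable.

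First I would focus on the first integral, $I_{1}=\int_{0}^{1}t^{2}f^{\prime \prime }(tx+(1-t)a)\,dt$. Observing that $\frac{d}{dt}f^{\prime }(tx+(1-t)a)=(x-a)f^{\prime \prime }(tx+(1-t)a)$, I would integrate by parts with $t^{2}$ as the part to be differentiated; this produces a boundary contribution $\frac{f^{\prime }(x)}{x-a}$ (only the $t=1$ endpoint survives, since the $t^{2}$ factor kills the $t=0$ endpoint) together with a remaining integral $-\frac{2}{x-a}\int_{0}^{1}tf^{\prime }(tx+(1-t)a)\,dt$. A second integration by parts on this remaining integral, now using $\frac{d}{dt}f(tx+(1-t)a)=(x-a)f^{\prime }(tx+(1-t)a)$, yields a further boundary term and the integral $\int_{0}^{1}f(tx+(1-t)a)\,dt$, which the substitution $u=tx+(1-t)a$ turns into $\frac{1}{x-a}\int_{a}^{x}f(u)\,du$. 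Collecting the pieces gives
\begin{equation*}
I_{1}=\frac{f^{\prime }(x)}{x-a}-\frac{2f(x)}{(x-a)^{2}}+\frac{2}{(x-a)^{3}}\int_{a}^{x}f(u)\,du,
\end{equation*}
so that $\frac{(x-a)^{3}}{2(b-a)}I_{1}=\frac{(x-a)^{2}f^{\prime }(x)}{2(b-a)}-\frac{(x-a)f(x)}{b-a}+\frac{1}{b-a}\int_{a}^{x}f(u)\,du$.

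By the identical argument with $a$ replaced by $b$, the second integral satisfies $I_{2}=\frac{f^{\prime }(x)}{x-b}-\frac{2f(x)}{(x-b)^{2}}+\frac{2}{(x-b)^{3}}\int_{b}^{x}f(u)\,du$. The step requiring care here is the sign bookkeeping: the prefactor carries $(b-x)^{3}$ while the denominators produced by integration by parts carry powers of $(x-b)=-(b-x)$. Using $(x-b)^{2}=(b-x)^{2}$ and $(x-b)^{3}=-(b-x)^{3}$ the cube factors cancel, and after flipping $\int_{b}^{x}$ to $-\int_{x}^{b}$ I obtain $\frac{(b-x)^{3}}{2(b-a)}I_{2}=-\frac{(b-x)^{2}f^{\prime }(x)}{2(b-a)}-\frac{(b-x)f(x)}{b-a}+\frac{1}{b-a}\int_{x}^{b}f(u)\,du$.

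Finally I would add the two contributions. The integral terms combine as $\int_{a}^{x}+\int_{x}^{b}=\int_{a}^{b}$, producing $\frac{1}{b-a}\int_{a}^{b}f(u)\,du$; the $f(x)$ terms combine to $-\frac{f(x)[(x-a)+(b-x)]}{b-a}=-f(x)$; and the $f^{\prime }(x)$ terms combine to $\frac{f^{\prime }(x)}{2(b-a)}\left[(x-a)^{2}-(b-x)^{2}\right]$, where the bracket factors as a difference of squares, $(x-a)^{2}-(b-x)^{2}=(b-a)(2x-a-b)$, reducing this to $\left( x-\frac{a+b}{2}\right) f^{\prime }(x)$. These three sums are exactly the three terms of the left-hand side, which completes the verification. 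The only genuine obstacle is the careful tracking of signs in the second integral; everything else is routine integration by parts and a linear substitution.
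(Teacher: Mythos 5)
Your verification is correct, and it is the standard argument: the paper itself states this lemma without proof (quoting it from \cite{SSO}), and the proof given there is exactly your double integration by parts on each of the two integrals followed by the linear substitution, with the same sign bookkeeping at $b$. The only cosmetic point is that your divisions by $x-a$ and $x-b$ tacitly assume $x\in(a,b)$; the endpoint cases $x=a$ and $x=b$ are immediate, since the corresponding term on the right-hand side carries the vanishing factor $(x-a)^{3}$ or $(b-x)^{3}$ and the remaining identity is just the $I_{2}$ (respectively $I_{1}$) computation you already did.
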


\begin{theorem}
\label{teo 1.2} Let $f:I\subseteq \lbrack 0,\infty )\rightarrow 
%TCIMACRO{\U{211d} }%
%BeginExpansion
\mathbb{R}
%EndExpansion
$ be a twice differentiable function on $I^{\circ }$ such that $f^{\prime
\prime }\in L_{1}[a,b]$ where $a,b\in I$ with $a<b.$ If $\left\vert
f^{\prime \prime }\right\vert ^{q}$ is $s-$concave in the second sense on $%
[a,b]$ for some fixed $s\in (0,1],$ $p,q>1$ and $\frac{1}{p}+\frac{1}{q}=1,$
then the following inequality holds:%
\begin{eqnarray}
&&\left\vert \frac{1}{b-a}\int_{a}^{b}f(u)du-f(x)+\left( x-\frac{a+b}{2}%
\right) f^{\prime }(x)\right\vert  \label{1.1} \\
&\leq &\frac{2^{\frac{\left( s-1\right) }{q}}}{\left( 2p+1\right) ^{\frac{1}{%
p}}\left( b-a\right) }\left( \frac{\left( x-a\right) ^{3}\left\vert
f^{\prime \prime }\left( \frac{x+a}{2}\right) \right\vert +\left( b-x\right)
^{3}\left\vert f^{\prime \prime }\left( \frac{x+a}{2}\right) \right\vert }{2}%
\right)  \notag
\end{eqnarray}%
for each $x\in \left[ a,b\right] .$
\end{theorem}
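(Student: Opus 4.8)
The plan is to start from the integral identity in Lemma \ref{lem 1.0} and estimate its two integrals separately. Taking absolute values of both sides and applying the triangle inequality reduces the problem to bounding
\[
\left\vert \frac{(x-a)^{3}}{2(b-a)}\int_{0}^{1}t^{2}f^{\prime \prime }(tx+(1-t)a)\,dt\right\vert +\left\vert \frac{(b-x)^{3}}{2(b-a)}\int_{0}^{1}t^{2}f^{\prime \prime }(tx+(1-t)b)\,dt\right\vert ,
\]
so that $\int_{0}^{1}t^{2}\left\vert f^{\prime \prime }\right\vert \,dt$ becomes the central quantity to control.

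Next I would apply H\"{o}lder's inequality with exponents $p$ and $q$ to each integral, grouping the factor $t^{2}$ against the $L^{p}$ part and $\left\vert f^{\prime \prime }\right\vert$ against the $L^{q}$ part:
\[
\int_{0}^{1}t^{2}\left\vert f^{\prime \prime }(tx+(1-t)a)\right\vert \,dt\leq \left( \int_{0}^{1}t^{2p}\,dt\right) ^{1/p}\left( \int_{0}^{1}\left\vert f^{\prime \prime }(tx+(1-t)a)\right\vert ^{q}\,dt\right) ^{1/q}.
\]
The first factor is elementary, since $\int_{0}^{1}t^{2p}\,dt=\frac{1}{2p+1}$, which produces the constant $(2p+1)^{-1/p}$ appearing in the statement.

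The crucial step is estimating $\int_{0}^{1}\left\vert f^{\prime \prime }(tx+(1-t)a)\right\vert ^{q}\,dt$ using that $\left\vert f^{\prime \prime }\right\vert ^{q}$ is $s$-concave in the second sense. Here I would invoke the Hermite--Hadamard inequality for such functions, namely $\int_{0}^{1}g(tx+(1-t)a)\,dt\leq 2^{s-1}g\!\left( \frac{x+a}{2}\right)$ for a function $g$ that is $s$-concave in the second sense. Applying this to $g=\left\vert f^{\prime \prime }\right\vert ^{q}$ and then taking the $q$-th root yields the factor $2^{(s-1)/q}\left\vert f^{\prime \prime }\!\left( \frac{x+a}{2}\right) \right\vert$, and analogously $2^{(s-1)/q}\left\vert f^{\prime \prime }\!\left( \frac{x+b}{2}\right) \right\vert$ for the second integral. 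Substituting both estimates back and collecting the common constant $\frac{2^{(s-1)/q}}{(2p+1)^{1/p}(b-a)}$ reproduces the claimed bound; I note in passing that the second occurrence of $f^{\prime \prime }\!\left( \frac{x+a}{2}\right)$ in \eqref{1.1} should read $f^{\prime \prime }\!\left( \frac{x+b}{2}\right)$, which is exactly what this computation delivers.

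I expect the only delicate point to be the correct orientation and normalization of the $s$-concave Hermite--Hadamard estimate: one must check that the midpoint values $\frac{x+a}{2}$ and $\frac{x+b}{2}$ together with the factor $2^{s-1}$ appear with the inequality reversed in the direction appropriate to concavity rather than convexity. Once the identity of Lemma \ref{lem 1.0}, H\"{o}lder's inequality, and this midpoint estimate are assembled, the remaining algebra is routine.
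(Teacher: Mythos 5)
Your proof is correct, and it is essentially the argument behind the statement: this paper gives no proof of Theorem \ref{teo 1.2} at all (it is quoted from \cite{SSO}), and the proof in that source is exactly your chain --- the identity of Lemma \ref{lem 1.0}, the triangle inequality, H\"{o}lder's inequality with $\int_{0}^{1}t^{2p}dt=\frac{1}{2p+1}$, and the reversed Hermite--Hadamard midpoint bound $\frac{1}{v-u}\int_{u}^{v}g(t)dt\leq 2^{s-1}g\left( \frac{u+v}{2}\right) $ applied to $g=\left\vert f^{\prime \prime }\right\vert ^{q}$, which is $s$-concave in the second sense. The same scheme is what this paper itself uses to prove the analogous Theorem \ref{teo 2.3}, where Jensen's integral inequality plays the role of your midpoint estimate (the two coincide at $s=1$, consistent with Remark \ref{rem 2.1}). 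You are also right that the second occurrence of $f^{\prime \prime }\left( \frac{x+a}{2}\right) $ in (\ref{1.1}) is a typo for $f^{\prime \prime }\left( \frac{x+b}{2}\right) $: your computation delivers the latter, and Corollary \ref{co 1.1} confirms it, since at $x=\frac{a+b}{2}$ the second term becomes $f^{\prime \prime }\left( \frac{a+3b}{4}\right) $.
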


\begin{corollary}
\label{co 1.1} If in (\ref{1.1}), we choose $x=\frac{a+b}{2},$ then we have 
\begin{equation*}
\left\vert \frac{1}{b-a}\int_{a}^{b}f(u)du-f\left( \frac{a+b}{2}\right)
\right\vert \leq \frac{2^{\frac{\left( s-1\right) }{q}}\left( b-a\right) ^{2}%
}{16\left( 2p+1\right) ^{\frac{1}{p}}}\left[ \left\vert f^{\prime \prime
}\left( \frac{3a+b}{4}\right) \right\vert +\left\vert f^{\prime \prime
}\left( \frac{a+3b}{4}\right) \right\vert \right] .
\end{equation*}%
For instance, if $s=1,$ then we have 
\begin{equation*}
\left\vert \frac{1}{b-a}\int_{a}^{b}f(u)du-f\left( \frac{a+b}{2}\right)
\right\vert \leq \frac{\left( b-a\right) ^{2}}{16\left( 2p+1\right) ^{\frac{1%
}{p}}}\left[ \left\vert f^{\prime \prime }\left( \frac{3a+b}{4}\right)
\right\vert +\left\vert f^{\prime \prime }\left( \frac{a+3b}{4}\right)
\right\vert \right] .
\end{equation*}
\end{corollary}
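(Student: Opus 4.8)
The plan is to obtain the corollary as a direct specialization of Theorem \ref{teo 1.2}, choosing the free parameter $x=\frac{a+b}{2}$ in inequality (\ref{1.1}) and then simplifying both sides. First I would treat the left-hand side: since $x-\frac{a+b}{2}=0$ when $x=\frac{a+b}{2}$, the term $\left( x-\frac{a+b}{2}\right) f^{\prime }(x)$ vanishes, so the left-hand side of (\ref{1.1}) collapses to $\left\vert \frac{1}{b-a}\int_{a}^{b}f(u)du-f\left( \frac{a+b}{2}\right) \right\vert$, which is precisely the quantity appearing in the corollary.

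Next I would evaluate the geometric data on the right-hand side. With $x=\frac{a+b}{2}$ we have $x-a=b-x=\frac{b-a}{2}$, hence $\left( x-a\right) ^{3}=\left( b-x\right) ^{3}=\frac{\left( b-a\right) ^{3}}{8}$. The arguments of the second derivatives become $\frac{x+a}{2}=\frac{3a+b}{4}$ and, reading the second summand of (\ref{1.1}) with the symmetric argument $\frac{x+b}{2}$ dictated by the structure of Lemma \ref{lem 1.0}, $\frac{x+b}{2}=\frac{a+3b}{4}$.

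Substituting these values into the right-hand side of (\ref{1.1}), I would factor $\frac{(b-a)^{3}}{8}$ out of the bracketed sum, cancel one factor of $b-a$ against the $(b-a)$ in the denominator, and combine the numerical constants via $\frac{1}{2}\cdot \frac{1}{8}=\frac{1}{16}$. This yields the bound $\frac{2^{(s-1)/q}(b-a)^{2}}{16(2p+1)^{1/p}}\left[ \left\vert f^{\prime \prime }\left( \frac{3a+b}{4}\right) \right\vert +\left\vert f^{\prime \prime }\left( \frac{a+3b}{4}\right) \right\vert \right]$, as claimed. The special case then follows at once by setting $s=1$, since $2^{(s-1)/q}=2^{0}=1$ reduces the leading constant to $\frac{1}{16}$.

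Since the argument is a pure substitution into an already-established inequality, I do not expect any substantial obstacle; the only points requiring care are the bookkeeping of the numerical constant and the correct identification of the two \emph{distinct} second-derivative arguments $\frac{3a+b}{4}$ and $\frac{a+3b}{4}$ (rather than a single repeated one) once $x$ is fixed at the midpoint.
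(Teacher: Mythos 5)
Your proof is correct and is precisely the intended argument: the paper states Corollary \ref{co 1.1} as a direct specialization of (\ref{1.1}) at $x=\frac{a+b}{2}$ with no further proof, and your bookkeeping (the $f^{\prime}$ term vanishing, $\left( x-a\right) ^{3}=\left( b-x\right) ^{3}=\frac{\left( b-a\right) ^{3}}{8}$, the constant $\frac{1}{2}\cdot \frac{1}{8}=\frac{1}{16}$ after cancelling one factor of $b-a$, and $2^{\left( s-1\right) /q}=1$ at $s=1$) all checks out. You also correctly diagnosed the misprint in Theorem \ref{teo 1.2}, where the second occurrence of $\left\vert f^{\prime \prime }\left( \frac{x+a}{2}\right) \right\vert$ should read $\left\vert f^{\prime \prime }\left( \frac{x+b}{2}\right) \right\vert$, as both Lemma \ref{lem 1.0} and the arguments $\frac{3a+b}{4}$, $\frac{a+3b}{4}$ in the corollary confirm.
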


In \cite{Z}, Liu introduced some companions of an Ostrowski type inequality
for functions whose first derivative are absolutely continuous.In \cite{B},
Barnett et al. established some companions for the Ostrowski inequality and
the generalized trapezoid inequality. In \cite{H}, Alomari et al. introduced
some companions of Ostrowski inequality for functions whose first
derivatives absolute value are convex.

In this paper, we established some companions of Ostrowski type inequality
for absolutely continuous functions whose second derivatives absolute value
are convex and concave.

In order to prove our main results we need the following Lemma \cite{Z}:

\begin{lemma}
\label{lem 1.1} Let $f:[a,b]\rightarrow 
%TCIMACRO{\U{211d} }%
%BeginExpansion
\mathbb{R}
%EndExpansion
$ be such that the derivative $f^{\prime }$ is absolutely continuous on $%
[a,b].$ Then we have the equality%
\begin{eqnarray*}
&&\frac{1}{b-a}\int_{a}^{b}f(t)dt-\frac{1}{2}\left[ f(x)+f(a+b-x)\right] \\
&&+\frac{1}{2}\left( x-\frac{3a+b}{4}\right) \left[ f^{\prime }(x)-f^{\prime
}(a+b-x)\right] \\
&=&\frac{1}{2\left( b-a\right) }\left[ \int_{a}^{x}\left( t-a\right)
^{2}f^{\prime \prime }(t)dt+\int_{x}^{a+b-x}\left( t-\frac{a+b}{2}\right)
^{2}f^{\prime \prime }(t)dt\right. \\
&&\left. +\int_{a+b-x}^{b}\left( t-b\right) ^{2}f^{\prime \prime }(t)dt 
\right]
\end{eqnarray*}%
for all $x\in \left[ a,\frac{a+b}{2}\right] .$
\end{lemma}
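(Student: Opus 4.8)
The plan is to verify the identity directly, working from the right-hand side, by integrating by parts twice in each of the three integrals. Since $f'$ is absolutely continuous on $[a,b]$, the second derivative $f''$ exists almost everywhere and is integrable, so the integration-by-parts formula is legitimate on each subinterval. The guiding observation is that every kernel is a quadratic in $t$ whose second derivative is the constant $2$; this constant is exactly what will regenerate the average-value term $\frac{1}{b-a}\int_a^b f$ after two integrations by parts.

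Concretely, for the first integral I would write $\int_a^x (t-a)^2 f''(t)\,dt$ and integrate by parts once to get $(x-a)^2 f'(x) - 2\int_a^x (t-a)f'(t)\,dt$, the lower boundary term vanishing because $(t-a)^2 = 0$ at $t=a$; a second integration by parts then gives $(x-a)^2 f'(x) - 2(x-a)f(x) + 2\int_a^x f(t)\,dt$. I would treat the middle integral with kernel $\left(t-\frac{a+b}{2}\right)^2$ and the last integral with kernel $(t-b)^2$ in exactly the same way, in each case noting which boundary contribution drops out (the upper one at $t=b$ for the third integral).

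The decisive simplification comes from the symmetry of the middle interval $[x,\,a+b-x]$ about the point $\frac{a+b}{2}$: the kernel $\left(t-\frac{a+b}{2}\right)^2$ takes the common value $\left(\frac{a+b}{2}-x\right)^2$ at both endpoints, so the boundary terms at $x$ and at $a+b-x$ collect cleanly. Once the three expressions are summed, the three $f$-integrals combine to $2\int_a^b f$, while the coefficients of $f(x)$ and of $f(a+b-x)$ each reduce to the common value $-(b-a)$.

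The one genuine piece of algebra — and the step I expect to be the main bookkeeping obstacle — is collecting the coefficients of $f'(x)$ and of $f'(a+b-x)$. These emerge as $(x-a)^2 - \left(\frac{a+b}{2}-x\right)^2$ and its negative; factoring the difference of squares and simplifying yields $(b-a)\left(x-\frac{3a+b}{4}\right)$, precisely the factor appearing in the statement. Multiplying the entire sum by $\frac{1}{2(b-a)}$ then reproduces the left-hand side term by term. Since the assertion is a pure identity rather than an inequality, no estimation is needed and the only care required lies in tracking boundary terms and signs correctly.
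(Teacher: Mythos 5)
Your proof is correct, and it is essentially the canonical argument: the paper itself states this lemma without proof, citing Liu \cite{Z}, where the identity is obtained by precisely your method --- integrating $f''$ by parts twice against the piecewise quadratic kernel built from $(t-a)^{2}$, $\left( t-\frac{a+b}{2}\right) ^{2}$ and $(t-b)^{2}$, which is legitimate since $f^{\prime }$ is absolutely continuous (hence $f''$ exists a.e.\ and is integrable, and $f$, $f^{\prime }$ are both absolutely continuous). Your bookkeeping checks out: the three $f$-integrals sum to $2\int_{a}^{b}f(t)\,dt$; the coefficient of each of $f(x)$ and $f(a+b-x)$ is $-2(x-a)-2\left( \frac{a+b}{2}-x\right) =-(b-a)$; and the coefficient of $f^{\prime }(x)$ is $(x-a)^{2}-\left( \frac{a+b}{2}-x\right) ^{2}=\left( \frac{b-a}{2}\right) \left( 2x-\frac{3a+b}{2}\right) =(b-a)\left( x-\frac{3a+b}{4}\right) $, with the opposite sign on $f^{\prime }(a+b-x)$, so that multiplying the sum by $\frac{1}{2(b-a)}$ reproduces the left-hand side exactly.
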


\section{main results}

We will start with the following theorem:

\begin{theorem}
\label{teo 2.1} Let $f:[a,b]\rightarrow 
%TCIMACRO{\U{211d} }%
%BeginExpansion
\mathbb{R}
%EndExpansion
$ be a function such that $f^{\prime }$ is absolutely continuous on $[a,b],$ 
$f^{\prime \prime }\in L_{1}[a,b].$ If $\left\vert f^{\prime \prime
}\right\vert $ is convex on $[a,b],$ then we have the following inequality:%
\begin{eqnarray*}
&&\left\vert \frac{1}{b-a}\int_{a}^{b}f(t)dt-\frac{1}{2}\left[ f(x)+f(a+b-x)%
\right] \right. \\
&&\left. +\frac{1}{2}\left( x-\frac{3a+b}{4}\right) \left[ f^{\prime
}(x)-f^{\prime }(a+b-x)\right] \right\vert \\
&\leq &\frac{\left( x-a\right) ^{3}}{24(b-a)}\left[ \left\vert f^{\prime
\prime }(a)\right\vert +\left\vert f^{\prime \prime }(b)\right\vert \right]
\\
&&+\frac{6\left( x-a\right) ^{3}+\left( a+b-2x\right) ^{3}}{48(b-a)}\left[
\left\vert f^{\prime \prime }(x)\right\vert +\left\vert f^{\prime \prime
}(a+b-x)\right\vert \right]
\end{eqnarray*}%
for all $x\in \left[ a,\frac{a+b}{2}\right] .$
\end{theorem}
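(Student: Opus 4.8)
The plan is to start from the integral identity provided by Lemma \ref{lem 1.1}, apply the triangle inequality to pass the absolute value through the three integrals on its right-hand side, and then exploit the convexity of $\left\vert f^{\prime \prime }\right\vert $ to control the integrand pointwise on each subinterval. In this way the left-hand side of the claimed inequality is bounded by
\begin{equation*}
\frac{1}{2(b-a)}\left[ \int_{a}^{x}(t-a)^{2}\left\vert f^{\prime \prime }(t)\right\vert dt+\int_{x}^{a+b-x}\left( t-\frac{a+b}{2}\right) ^{2}\left\vert f^{\prime \prime }(t)\right\vert dt+\int_{a+b-x}^{b}(t-b)^{2}\left\vert f^{\prime \prime }(t)\right\vert dt\right] .
\end{equation*}

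First I would treat the two outer integrals. On $[a,x]$ I would write each $t$ as the convex combination $t=\frac{x-t}{x-a}a+\frac{t-a}{x-a}x$, so that convexity gives $\left\vert f^{\prime \prime }(t)\right\vert \leq \frac{x-t}{x-a}\left\vert f^{\prime \prime }(a)\right\vert +\frac{t-a}{x-a}\left\vert f^{\prime \prime }(x)\right\vert $; substituting $u=t-a$ reduces the weighted integrals to the elementary moments $\int_{0}^{x-a}u^{2}(x-a-u)\,du$ and $\int_{0}^{x-a}u^{3}\,du$, producing the coefficients $\frac{(x-a)^{3}}{12}$ for $\left\vert f^{\prime \prime }(a)\right\vert $ and $\frac{(x-a)^{3}}{4}$ for $\left\vert f^{\prime \prime }(x)\right\vert $. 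The integral over $[a+b-x,b]$ is handled identically by the mirror substitution $v=b-t$, contributing $\frac{(x-a)^{3}}{12}\left\vert f^{\prime \prime }(b)\right\vert +\frac{(x-a)^{3}}{4}\left\vert f^{\prime \prime }(a+b-x)\right\vert $.

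The delicate step is the middle integral over $[x,a+b-x]$, which is symmetric about $\frac{a+b}{2}$ and carries the kernel $\left( t-\frac{a+b}{2}\right) ^{2}$. Here I would again bound $\left\vert f^{\prime \prime }(t)\right\vert $ by the convex combination of its values at the endpoints $x$ and $a+b-x$, and then change variables by $w=t-\frac{a+b}{2}$, so that $w$ ranges over the symmetric interval $[-k,k]$ with $k=\frac{a+b-2x}{2}$. The key simplification is that the odd moment $\int_{-k}^{k}w^{3}\,dw$ vanishes, leaving only $\int_{-k}^{k}w^{2}\,dw=\frac{2k^{3}}{3}$; this yields the common coefficient $\frac{k^{3}}{3}=\frac{(a+b-2x)^{3}}{24}$ for both $\left\vert f^{\prime \prime }(x)\right\vert $ and $\left\vert f^{\prime \prime }(a+b-x)\right\vert $. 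I expect this symmetric bookkeeping to be the main place where sign or substitution errors could creep in.

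Finally I would collect the contributions according to the four evaluation points. The values $\left\vert f^{\prime \prime }(a)\right\vert $ and $\left\vert f^{\prime \prime }(b)\right\vert $ each receive the coefficient $\frac{(x-a)^{3}}{12}$, and after multiplying by $\frac{1}{2(b-a)}$ this gives the factor $\frac{(x-a)^{3}}{24(b-a)}$ of the statement. The points $x$ and $a+b-x$ each accumulate $\frac{(x-a)^{3}}{4}$ from an outer integral together with $\frac{(a+b-2x)^{3}}{24}$ from the middle one; combining these over the common denominator $24$ and multiplying by $\frac{1}{2(b-a)}$ produces exactly $\frac{6(x-a)^{3}+(a+b-2x)^{3}}{48(b-a)}$, which completes the proof.
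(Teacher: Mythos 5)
Your proposal is correct and follows essentially the same route as the paper's proof: Lemma \ref{lem 1.1} plus the triangle inequality, pointwise convexity bounds for $\left\vert f^{\prime \prime }\right\vert$ on each of the three subintervals, and evaluation of the resulting moment integrals, yielding exactly the coefficients $\frac{(x-a)^{3}}{12}$, $\frac{(x-a)^{3}}{4}$, and $\frac{(a+b-2x)^{3}}{24}$ that the paper obtains. Your substitutions $u=t-a$, $v=b-t$, $w=t-\frac{a+b}{2}$ and the observation that the odd moment $\int_{-k}^{k}w^{3}\,dw$ vanishes are merely a cleaner bookkeeping of the same computation the paper carries out directly.
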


\begin{proof}
Using Lemma \ref{lem 1.1} and the property of the modulus we have%
\begin{eqnarray*}
&&\left\vert \frac{1}{b-a}\int_{a}^{b}f(t)dt-\frac{1}{2}\left[ f(x)+f(a+b-x)%
\right] \right. \\
&&\left. +\frac{1}{2}\left( x-\frac{3a+b}{4}\right) \left[ f^{\prime
}(x)-f^{\prime }(a+b-x)\right] \right\vert \\
&\leq &\frac{1}{2(b-a)}\left[ \int_{a}^{x}\left( t-a\right) ^{2}\left\vert
f^{\prime \prime }(t)\right\vert dt+\int_{x}^{a+b-x}\left( t-\frac{a+b}{2}%
\right) ^{2}\left\vert f^{\prime \prime }(t)\right\vert dt\right. \\
&&\left. +\int_{a+b-x}^{b}\left( t-b\right) ^{2}\left\vert f^{\prime \prime
}(t)\right\vert dt\right] .
\end{eqnarray*}%
Since $\left\vert f^{\prime \prime }\right\vert $ is convex on $[a,b],$ we
have%
\begin{equation*}
\left\vert f^{\prime \prime }(t)\right\vert \leq \frac{t-a}{x-a}\left\vert
f^{\prime \prime }(x)\right\vert +\frac{x-t}{x-a}\left\vert f^{\prime \prime
}(a)\right\vert ,\text{ \ \ \ \ }t\in \lbrack a,x];
\end{equation*}

\begin{equation*}
\left\vert f^{\prime \prime }(t)\right\vert \leq \frac{t-x}{a+b-2x}%
\left\vert f^{\prime \prime }(a+b-x)\right\vert +\frac{a+b-x-t}{a+b-2x}%
\left\vert f^{\prime \prime }(x)\right\vert ,\text{ \ \ \ \ }t\in (x,a+b-x]
\end{equation*}%
and%
\begin{equation*}
\left\vert f^{\prime \prime }(t)\right\vert \leq \frac{t-a-b+x}{x-a}%
\left\vert f^{\prime \prime }(b)\right\vert +\frac{b-t}{x-a}\left\vert
f^{\prime \prime }(a+b-x)\right\vert ,\text{ \ \ \ \ }t\in (a+b-x,b].
\end{equation*}%
Therefore we can write%
\begin{eqnarray*}
&&\left\vert \frac{1}{b-a}\int_{a}^{b}f(t)dt-\frac{1}{2}\left[ f(x)+f(a+b-x)%
\right] \right. \\
&&\left. +\frac{1}{2}\left( x-\frac{3a+b}{4}\right) \left[ f^{\prime
}(x)-f^{\prime }(a+b-x)\right] \right\vert \\
&\leq &\frac{1}{2(b-a)}\left\{ \int_{a}^{x}\left( t-a\right) ^{2}\left[ 
\frac{t-a}{x-a}\left\vert f^{\prime \prime }(x)\right\vert +\frac{x-t}{x-a}%
\left\vert f^{\prime \prime }(a)\right\vert \right] dt\right. \\
&&\left. +\int_{x}^{a+b-x}\left( t-\frac{a+b}{2}\right) ^{2}\left[ \frac{t-x%
}{a+b-2x}\left\vert f^{\prime \prime }(a+b-x)\right\vert +\frac{a+b-x-t}{%
a+b-2x}\left\vert f^{\prime \prime }(x)\right\vert \right] \right. \\
&&\left. +\int_{a+b-x}^{b}\left( t-b\right) ^{2}\left[ \frac{t-a-b+x}{x-a}%
\left\vert f^{\prime \prime }(b)\right\vert +\frac{b-t}{x-a}\left\vert
f^{\prime \prime }(a+b-x)\right\vert \right] \right\} \\
&=&\frac{1}{2(b-a)}\left\{ \frac{1}{4}\left( x-a\right) ^{3}\left\vert
f^{\prime \prime }(x)\right\vert +\frac{1}{12}\left( x-a\right)
^{3}\left\vert f^{\prime \prime }(a)\right\vert \right. \\
&&\left. +\frac{1}{24}\left( a+b-2x\right) ^{3}\left\vert f^{\prime \prime
}(a+b-x)\right\vert +\frac{1}{24}\left( a+b-2x\right) ^{3}\left\vert
f^{\prime \prime }(x)\right\vert \right. \\
&&\left. +\frac{1}{12}\left( x-a\right) ^{3}\left\vert f^{\prime \prime
}(b)\right\vert +\frac{1}{4}\left( x-a\right) ^{3}\left\vert f^{\prime
\prime }(a+b-x)\right\vert \right\} \\
&=&\frac{\left( x-a\right) ^{3}}{24(b-a)}\left[ \left\vert f^{\prime \prime
}(a)\right\vert +\left\vert f^{\prime \prime }(b)\right\vert \right] \\
&&+\frac{6\left( x-a\right) ^{3}+\left( a+b-2x\right) ^{3}}{48(b-a)}\left[
\left\vert f^{\prime \prime }(x)\right\vert +\left\vert f^{\prime \prime
}(a+b-x)\right\vert \right] ,
\end{eqnarray*}%
which is the desired result.
\end{proof}

\begin{corollary}
\label{co 2.1} Let $f$ as in Theorem \ref{teo 2.1}. Additionally, if $%
f^{\prime }(x)=f^{\prime }(a+b-x)$, we have%
\begin{eqnarray*}
&&\left\vert \frac{1}{b-a}\int_{a}^{b}f(t)dt-\frac{1}{2}\left[ f(x)+f(a+b-x)%
\right] \right\vert \\
&&\leq \frac{\left( x-a\right) ^{3}}{24(b-a)}\left[ \left\vert f^{\prime
\prime }(a)\right\vert +\left\vert f^{\prime \prime }(b)\right\vert \right]
\\
&&+\frac{6\left( x-a\right) ^{3}+\left( a+b-2x\right) ^{3}}{48(b-a)}\left[
\left\vert f^{\prime \prime }(x)\right\vert +\left\vert f^{\prime \prime
}(a+b-x)\right\vert \right] .
\end{eqnarray*}
\end{corollary}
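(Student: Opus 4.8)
The plan is to invoke Theorem~\ref{teo 2.1} directly and to exploit the additional hypothesis in order to annihilate the derivative term on the left-hand side. First I would write down the full inequality guaranteed by Theorem~\ref{teo 2.1}, whose left-hand side is the modulus of
\begin{equation*}
\frac{1}{b-a}\int_{a}^{b}f(t)dt-\frac{1}{2}\left[ f(x)+f(a+b-x)\right] +\frac{1}{2}\left( x-\frac{3a+b}{4}\right) \left[ f^{\prime }(x)-f^{\prime }(a+b-x)\right]
\end{equation*}
and whose right-hand side is exactly the bound claimed in the corollary. Since the hypotheses on $f$ (absolute continuity of $f^{\prime }$, integrability of $f^{\prime \prime }$, and convexity of $\left\vert f^{\prime \prime }\right\vert$) are inherited verbatim, Theorem~\ref{teo 2.1} applies without modification for every $x\in\left[a,\frac{a+b}{2}\right]$.

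Next I would use the supplementary assumption $f^{\prime }(x)=f^{\prime }(a+b-x)$, which forces $f^{\prime }(x)-f^{\prime }(a+b-x)=0$. Consequently the product $\frac{1}{2}\left( x-\frac{3a+b}{4}\right) \left[ f^{\prime }(x)-f^{\prime }(a+b-x)\right]$ vanishes identically, irrespective of the value of the coefficient $x-\frac{3a+b}{4}$ (which is in any case finite on the admissible range of $x$). Thus the expression inside the modulus collapses to $\frac{1}{b-a}\int_{a}^{b}f(t)dt-\frac{1}{2}\left[ f(x)+f(a+b-x)\right]$, which is precisely the quantity appearing on the left of the corollary.

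Finally, because the right-hand side of Theorem~\ref{teo 2.1} depends only on the second-derivative values $\left\vert f^{\prime \prime }(a)\right\vert$, $\left\vert f^{\prime \prime }(b)\right\vert$, $\left\vert f^{\prime \prime }(x)\right\vert$, $\left\vert f^{\prime \prime }(a+b-x)\right\vert$ and not on any first-derivative data, it is left untouched by the extra hypothesis. Substituting the cancellation into the Theorem~\ref{teo 2.1} inequality therefore yields the stated bound at once. I do not anticipate any genuine obstacle here: the argument is a one-line specialization, and the only thing worth noting explicitly is that the suppressed term is exactly zero rather than merely small, so the bound transfers with no loss.
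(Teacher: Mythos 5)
Your proposal is correct and is exactly the argument the paper intends: the corollary is stated without proof precisely because it is the immediate specialization of Theorem~\ref{teo 2.1} in which the hypothesis $f^{\prime }(x)=f^{\prime }(a+b-x)$ annihilates the term $\frac{1}{2}\left( x-\frac{3a+b}{4}\right) \left[ f^{\prime }(x)-f^{\prime }(a+b-x)\right]$, leaving the right-hand side unchanged. Nothing is missing from your one-line specialization.
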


\begin{corollary}
\label{co 2.2} In Corollary \ref{co 2.1}, if $f$ is symmetric function, $%
f(a+b-x)=f(x),$ for all $x\in \left[ a,\frac{a+b}{2}\right] $ we have 
\begin{eqnarray*}
&&\left\vert \frac{1}{b-a}\int_{a}^{b}f(t)dt-f(x)\right\vert \\
&\leq &\frac{\left( x-a\right) ^{3}}{24(b-a)}\left[ \left\vert f^{\prime
\prime }(a)\right\vert +\left\vert f^{\prime \prime }(b)\right\vert \right]
\\
&&+\frac{6\left( x-a\right) ^{3}+\left( a+b-2x\right) ^{3}}{48(b-a)}\left[
\left\vert f^{\prime \prime }(x)\right\vert +\left\vert f^{\prime \prime
}(a+b-x)\right\vert \right] ,
\end{eqnarray*}%
which is an Ostrowski type inequality.
\end{corollary}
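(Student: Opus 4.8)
The plan is to read the result off directly from Corollary \ref{co 2.1}, using the symmetry hypothesis only to collapse the averaged function values, so that no fresh estimation is needed. Since Corollary \ref{co 2.2} is phrased as a further specialization of Corollary \ref{co 2.1}, I would keep in force the derivative-matching condition $f^{\prime }(x)=f^{\prime }(a+b-x)$ assumed there; this is exactly what annihilates the term $\frac{1}{2}\left( x-\frac{3a+b}{4}\right) \left[ f^{\prime }(x)-f^{\prime }(a+b-x)\right] $ appearing in Theorem \ref{teo 2.1}. The starting point is therefore the inequality of Corollary \ref{co 2.1}, whose left-hand side is $\left\vert \frac{1}{b-a}\int_{a}^{b}f(t)dt-\frac{1}{2}\left[ f(x)+f(a+b-x)\right] \right\vert $ and whose right-hand side is the convexity bound inherited verbatim from Theorem \ref{teo 2.1}.

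Next I would invoke the new symmetry hypothesis $f(a+b-x)=f(x)$, valid for every $x\in \left[ a,\frac{a+b}{2}\right] $. Substituting this identity into the averaged term gives $\frac{1}{2}\left[ f(x)+f(a+b-x)\right] =f(x)$, so the left-hand side of the Corollary \ref{co 2.1} inequality becomes the genuine Ostrowski-type quantity $\left\vert \frac{1}{b-a}\int_{a}^{b}f(t)dt-f(x)\right\vert $. Because the right-hand side of Corollary \ref{co 2.1} involves only the convexity bound on $\left\vert f^{\prime \prime }\right\vert $ together with the geometric data $a$, $b$, $x$, it is untouched by the symmetry assumption and is transcribed without change. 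Combining these two observations yields the asserted inequality.

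The only point deserving care is the bookkeeping of the two hypotheses rather than any analytic difficulty. Differentiating the symmetry relation $f(a+b-x)=f(x)$ gives $f^{\prime }(a+b-x)=-f^{\prime }(x)$, so symmetry of $f$ by itself does not make the derivative bracket vanish; that cancellation is supplied by the standing Corollary \ref{co 2.1} hypothesis $f^{\prime }(x)=f^{\prime }(a+b-x)$, and it is the conjunction of both conditions that collapses the left-hand side to the pure Ostrowski form. Consequently the proof reduces to a single substitution: invoke the Corollary \ref{co 2.1} bound, replace $\frac{1}{2}\left[ f(x)+f(a+b-x)\right] $ by $f(x)$ using symmetry, and leave the right-hand side exactly as it stands.
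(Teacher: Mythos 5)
Your proof matches the paper's (implicit) argument exactly: Corollary \ref{co 2.2} is stated without separate proof precisely because it follows from Corollary \ref{co 2.1} by the single substitution $\frac{1}{2}\left[ f(x)+f(a+b-x)\right] =f(x)$ supplied by the symmetry hypothesis, with the right-hand side carried over unchanged. Your side remark that symmetry alone gives $f^{\prime }(a+b-x)=-f^{\prime }(x)$, so the vanishing of the derivative bracket genuinely depends on the retained hypothesis $f^{\prime }(x)=f^{\prime }(a+b-x)$ from Corollary \ref{co 2.1}, is correct and is sharper bookkeeping than the paper itself records.
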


\begin{corollary}
\label{co 2.3} In Corollary \ref{co 2.2}, if we choose $x=\frac{a+b}{2},$ we
have 
\begin{eqnarray*}
&&\left\vert \frac{1}{b-a}\int_{a}^{b}f(t)dt-f\left( \frac{a+b}{2}\right)
\right\vert \\
&\leq &\frac{\left( b-a\right) ^{2}}{192}\left[ \left\vert f^{\prime \prime
}(a)\right\vert +6\left\vert f^{\prime \prime }\left( \frac{a+b}{2}\right)
\right\vert +\left\vert f^{\prime \prime }(b)\right\vert \right] .
\end{eqnarray*}
\end{corollary}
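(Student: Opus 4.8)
The plan is to obtain Corollary~\ref{co 2.3} as a direct specialization of Corollary~\ref{co 2.2}, evaluating the general bound at the midpoint $x=\frac{a+b}{2}$. No new analytic input is required: the entire argument is the substitution $x=\frac{a+b}{2}$ into the inequality of Corollary~\ref{co 2.2}, followed by algebraic simplification of the numerical constants.

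First I would record the three elementary identities that result from setting $x=\frac{a+b}{2}$. Namely, $x-a=\frac{b-a}{2}$, so that $(x-a)^{3}=\frac{(b-a)^{3}}{8}$; next $a+b-2x=0$, so that the cubic term $(a+b-2x)^{3}$ vanishes identically; and finally $a+b-x=\frac{a+b}{2}=x$, which collapses both $f^{\prime \prime }(x)$ and $f^{\prime \prime }(a+b-x)$ to the single value $f^{\prime \prime }\left( \frac{a+b}{2}\right) $.

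Then I would substitute these into the right-hand side of Corollary~\ref{co 2.2}. The first summand becomes $\frac{(b-a)^{3}/8}{24(b-a)}\left[ \left\vert f^{\prime \prime }(a)\right\vert +\left\vert f^{\prime \prime }(b)\right\vert \right] =\frac{(b-a)^{2}}{192}\left[ \left\vert f^{\prime \prime }(a)\right\vert +\left\vert f^{\prime \prime }(b)\right\vert \right] $. For the second summand, since $(a+b-2x)^{3}=0$ the numerator reduces to $6(x-a)^{3}=\frac{6(b-a)^{3}}{8}$, while the bracket becomes $2\left\vert f^{\prime \prime }\left( \frac{a+b}{2}\right) \right\vert $, so this term equals $\frac{(b-a)^{2}}{32}\left\vert f^{\prime \prime }\left( \frac{a+b}{2}\right) \right\vert =\frac{6(b-a)^{2}}{192}\left\vert f^{\prime \prime }\left( \frac{a+b}{2}\right) \right\vert $. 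Placing both summands over the common denominator $192$ then collects the three second-derivative values with the stated weights $1$, $6$, $1$ on $\left\vert f^{\prime \prime }(a)\right\vert $, $\left\vert f^{\prime \prime }\left( \frac{a+b}{2}\right) \right\vert $, $\left\vert f^{\prime \prime }(b)\right\vert $ respectively, which is exactly the claimed inequality.

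There is essentially no conceptual obstacle here; the only point demanding care is the bookkeeping of the numerical constants — tracking the factor $\frac{1}{8}$ coming from $(x-a)^{3}$, the factor $6$ multiplying it in the second summand, and the doubling produced by the coincidence $a+b-x=x$ of the two midpoint values — so that everything lands correctly on the common denominator $192$.
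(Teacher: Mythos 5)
Your proof is correct and takes exactly the route the paper intends: Corollary \ref{co 2.3} is stated without a written proof, being simply the specialization $x=\frac{a+b}{2}$ of Corollary \ref{co 2.2}, and your bookkeeping --- $(x-a)^{3}=\frac{(b-a)^{3}}{8}$, the vanishing of $(a+b-2x)^{3}$, and the doubling from $a+b-x=x$ --- reproduces the constant $\frac{(b-a)^{2}}{192}$ with weights $1,6,1$ precisely as claimed.
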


\begin{corollary}
\label{co 2.4} In Theorem \ref{teo 2.1}, if we choose $x=\frac{3a+b}{4},$ we
have%
\begin{eqnarray*}
&&\left\vert \frac{1}{b-a}\int_{a}^{b}f(t)dt-\frac{1}{2}\left[ f\left( \frac{%
3a+b}{4}\right) +f\left( \frac{a+3b}{4}\right) \right] \right\vert \\
&\leq &\frac{\left( b-a\right) ^{2}}{1536}\left[ \left\vert f^{\prime \prime
}(a)\right\vert +7\left\vert f^{\prime \prime }\left( \frac{3a+b}{4}\right)
\right\vert +7\left\vert f^{\prime \prime }\left( \frac{a+3b}{4}\right)
\right\vert +\left\vert f^{\prime \prime }(b)\right\vert \right] .
\end{eqnarray*}
\end{corollary}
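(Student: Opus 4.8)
The plan is to obtain this corollary as a direct specialization of Theorem~\ref{teo 2.1}, since no new analytic input is required beyond substituting the particular abscissa $x=\frac{3a+b}{4}$ and simplifying the resulting numerical coefficients. So the entire argument reduces to computing three auxiliary quantities and then performing the bookkeeping on the two bracketed terms in the theorem's bound.

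First I would record the elementary identities that follow from $x=\frac{3a+b}{4}$, namely
\begin{equation*}
x-a=\frac{b-a}{4},\qquad a+b-x=\frac{a+3b}{4},\qquad a+b-2x=\frac{b-a}{2}.
\end{equation*}
In particular $f^{\prime\prime}(x)=f^{\prime\prime}\!\left(\frac{3a+b}{4}\right)$ and $f^{\prime\prime}(a+b-x)=f^{\prime\prime}\!\left(\frac{a+3b}{4}\right)$, which already produces the four evaluation points appearing on the right-hand side of the claimed inequality.

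Next I would substitute these into the two coefficients from Theorem~\ref{teo 2.1}. For the first term one computes $(x-a)^3=\frac{(b-a)^3}{64}$, so that
\begin{equation*}
\frac{(x-a)^3}{24(b-a)}=\frac{(b-a)^2}{1536},
\end{equation*}
which fixes the unit coefficients on $\left\vert f^{\prime\prime}(a)\right\vert$ and $\left\vert f^{\prime\prime}(b)\right\vert$. For the second term the only place where a small computation is needed is the numerator $6(x-a)^3+(a+b-2x)^3$; using $(x-a)^3=\frac{(b-a)^3}{64}$ and $(a+b-2x)^3=\frac{(b-a)^3}{8}$ this collapses to $\frac{6}{64}(b-a)^3+\frac{8}{64}(b-a)^3=\frac{7(b-a)^3}{32}$, whence
\begin{equation*}
\frac{6(x-a)^3+(a+b-2x)^3}{48(b-a)}=\frac{7(b-a)^2}{1536}.
\end{equation*}

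Finally I would assemble these two coefficients over the common denominator $1536$ to arrive at the stated bound with the weights $1,7,7,1$. The main (and essentially only) obstacle is arithmetic rather than conceptual: one must correctly cube the two lengths $\frac{b-a}{4}$ and $\frac{b-a}{2}$ and verify that $6\cdot\frac{1}{64}+\frac{1}{8}=\frac{7}{32}$, together with the normalizations $24\cdot64=1536$ and $48\cdot32=1536$ that make both contributions share the denominator $1536$. Since the inequality in Theorem~\ref{teo 2.1} holds for every $x\in\left[a,\frac{a+b}{2}\right]$ and $\frac{3a+b}{4}$ lies in that interval, the specialization is legitimate and the corollary follows at once.
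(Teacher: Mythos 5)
Your proposal is correct and coincides with the paper's (implicit) argument: Corollary~\ref{co 2.4} is obtained by pure substitution of $x=\frac{3a+b}{4}$ into Theorem~\ref{teo 2.1}, and your arithmetic checks out ($(x-a)^3=\frac{(b-a)^3}{64}$, $(a+b-2x)^3=\frac{(b-a)^3}{8}$, $6\cdot\frac{1}{64}+\frac{1}{8}=\frac{7}{32}$, with $24\cdot 64=48\cdot 32=1536$). The only detail worth stating explicitly, which you use silently, is that at this choice of $x$ the factor $x-\frac{3a+b}{4}$ vanishes, so the term $\frac{1}{2}\left(x-\frac{3a+b}{4}\right)\left[f^{\prime}(x)-f^{\prime}(a+b-x)\right]$ drops out, which is exactly why the corollary's left-hand side contains no first-derivative contribution.
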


\begin{theorem}
\label{teo 2.2} Let $f:[a,b]\rightarrow 
%TCIMACRO{\U{211d} }%
%BeginExpansion
\mathbb{R}
%EndExpansion
$ be a function such that $f^{\prime }$ is absolutely continuous on $[a,b],$ 
$f^{\prime \prime }\in L_{1}[a,b].$ If $\left\vert f^{\prime \prime
}\right\vert ^{q}$ is convex on $[a,b],$ for all $x\in \left[ a,\frac{a+b}{2}%
\right] $ and $q>1,$ then we have the following inequality:%
\begin{eqnarray*}
&&\left\vert \frac{1}{b-a}\int_{a}^{b}f(t)dt-\frac{1}{2}\left[ f(x)+f(a+b-x)%
\right] \right. \\
&&\left. +\frac{1}{2}\left( x-\frac{3a+b}{4}\right) \left[ f^{\prime
}(x)-f^{\prime }(a+b-x)\right] \right\vert \\
&\leq &\frac{1}{2^{1+\frac{1}{q}}\left( b-a\right) \left( 2p+1\right) ^{%
\frac{1}{p}}}\left[ \left( x-a\right) ^{3}\left( \left\vert f^{\prime \prime
}(a)\right\vert ^{q}+\left\vert f^{\prime \prime }(x)\right\vert ^{q}\right)
^{\frac{1}{q}}\right. \\
&&\left. +\frac{\left( a+b-2x\right) ^{3}}{4}\left( \left\vert f^{\prime
\prime }(x)\right\vert ^{q}+\left\vert f^{\prime \prime }(a+b-x)\right\vert
^{q}\right) ^{\frac{1}{q}}\right. \\
&&\left. +\left( x-a\right) ^{3}\left( \left\vert f^{\prime \prime
}(a+b-x)\right\vert ^{q}+\left\vert f^{\prime \prime }(b)\right\vert
^{q}\right) ^{\frac{1}{q}}\right] ,
\end{eqnarray*}%
where $\frac{1}{p}+\frac{1}{q}=1.$
\end{theorem}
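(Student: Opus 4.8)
The plan is to begin exactly as in the proof of Theorem \ref{teo 2.1}: apply Lemma \ref{lem 1.1} and the triangle inequality for the modulus to bound the left-hand side by
\[
\frac{1}{2(b-a)}\left[\int_{a}^{x}(t-a)^{2}|f''(t)|\,dt+\int_{x}^{a+b-x}\Bigl(t-\tfrac{a+b}{2}\Bigr)^{2}|f''(t)|\,dt+\int_{a+b-x}^{b}(t-b)^{2}|f''(t)|\,dt\right].
\]
The essential difference from the convex case is that I now hold the power $q>1$, so instead of expanding $|f''|$ linearly I would apply Hölder's inequality to each of the three integrals separately, with the conjugate exponents $p,q$ satisfying $\frac1p+\frac1q=1$.

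**The three Hölder estimates.** For the first integral I would write $(t-a)^{2}|f''(t)|=(t-a)^{2}\cdot|f''(t)|$ and split as
\[
\int_{a}^{x}(t-a)^{2}|f''(t)|\,dt\leq\left(\int_{a}^{x}(t-a)^{2p}\,dt\right)^{\frac1p}\left(\int_{a}^{x}|f''(t)|^{q}\,dt\right)^{\frac1q}.
\]
The power integral evaluates to $\bigl((x-a)^{2p+1}/(2p+1)\bigr)^{1/p}=(x-a)^{(2p+1)/p}/(2p+1)^{1/p}$. For the remaining $|f''|^{q}$ factor I would invoke convexity of $|f''|^{q}$ on $[a,x]$, so that $\int_{a}^{x}|f''(t)|^{q}\,dt\leq\frac{x-a}{2}\bigl(|f''(a)|^{q}+|f''(x)|^{q}\bigr)$, which is just the integral form of the midpoint/trapezoid bound coming from convexity. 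Combining the two gives a factor $(x-a)^{(2p+1)/p}\cdot((x-a)/2)^{1/q}=(x-a)^{3}/2^{1/q}$, using $\frac{2p+1}{p}+\frac1q=2+\frac1p+\frac1q=3$. The third integral is handled identically on $[a+b-x,b]$, yielding the symmetric term with $|f''(a+b-x)|^{q}+|f''(b)|^{q}$.

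**The middle integral.** For the central piece on $[x,a+b-x]$ I would again apply Hölder, pulling out $\bigl(\int_{x}^{a+b-x}(t-\frac{a+b}{2})^{2p}\,dt\bigr)^{1/p}$. Here the integrand $(t-\frac{a+b}{2})^{2p}$ is symmetric about the midpoint $\frac{a+b}{2}$, so the integral equals $\frac{2}{2p+1}\bigl(\frac{a+b-2x}{2}\bigr)^{2p+1}$, whose $p$-th root is $2^{1/p}(2p+1)^{-1/p}\bigl(\frac{a+b-2x}{2}\bigr)^{(2p+1)/p}$. For the $|f''|^{q}$ factor, convexity on $[x,a+b-x]$ gives $\int_{x}^{a+b-x}|f''(t)|^{q}\,dt\leq\frac{a+b-2x}{2}\bigl(|f''(x)|^{q}+|f''(a+b-x)|^{q}\bigr)$. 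Assembling the powers of $(a+b-2x)/2$ and collecting the constant factors should produce the coefficient $\frac{(a+b-2x)^{3}}{4}$ in front of $(|f''(x)|^{q}+|f''(a+b-x)|^{q})^{1/q}$ after dividing by $2(b-a)$.

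**Main obstacle and assembly.** The routine but error-prone part will be tracking the powers of $2$ and the explicit exponents so that everything consolidates into the single prefactor $\frac{1}{2^{1+1/q}(b-a)(2p+1)^{1/p}}$; in particular I must verify that the three terms share exactly this common constant and that the middle term's extra symmetry factor $2^{1/p}$ combines with the $2^{-1/q}$ from convexity and the $(1/2)^{(2p+1)/p}$ from the half-interval to leave the clean $\tfrac14$ coefficient. The genuine conceptual point — and the only place one must be careful — is that convexity of $|f''|^{q}$ (not of $|f''|$) is what licenses the three integral bounds on the $|f''|^{q}$ factors; once those are in place, summing the three contributions and factoring out the common constant yields the stated inequality.
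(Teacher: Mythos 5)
Your proposal is correct and follows essentially the same route as the paper's own proof: Lemma \ref{lem 1.1}, H\"{o}lder's inequality applied separately to each of the three integrals, and the convexity of $\left\vert f^{\prime \prime }\right\vert ^{q}$ to bound each $\int \left\vert f^{\prime \prime }(t)\right\vert ^{q}dt$ by the trapezoidal value (the paper writes the pointwise linear convexity bound and integrates, which gives exactly your midpoint/trapezoid estimates), and your bookkeeping of the powers of $2$ --- including the $\frac{2p+1}{p}+\frac{1}{q}=3$ identity and the consolidation of the middle term's factors into the coefficient $\frac{\left( a+b-2x\right)^{3}}{4}$ --- checks out against the paper's final display.
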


\begin{proof}
Using Lemma \ref{lem 1.1}, H\"{o}lder inequality and convexity of $%
\left\vert f^{\prime \prime }\right\vert ^{q},$ we have%
\begin{eqnarray*}
&&\left\vert \frac{1}{b-a}\int_{a}^{b}f(t)dt-\frac{1}{2}\left[ f(x)+f(a+b-x)%
\right] \right.  \\
&&\left. +\frac{1}{2}\left( x-\frac{3a+b}{4}\right) \left[ f^{\prime
}(x)-f^{\prime }(a+b-x)\right] \right\vert  \\
&\leq &\frac{1}{2\left( b-a\right) }\left\{ \left( \int_{a}^{x}\left(
t-a\right) ^{2p}dt\right) ^{\frac{1}{p}}\left( \int_{a}^{x}\left\vert
f^{\prime \prime }(t)\right\vert ^{q}dt\right) ^{\frac{1}{q}}\right.  \\
&&\left. +\left( \int_{x}^{a+b-x}\left( t-\frac{a+b}{2}\right)
^{2p}dt\right) ^{\frac{1}{p}}\left( \int_{x}^{a+b-x}\left\vert f^{\prime
\prime }(t)\right\vert ^{q}dt\right) ^{\frac{1}{q}}\right.  \\
&&\left. +\left( \int_{a+b-x}^{b}\left( t-b\right) ^{2p}dt\right) ^{\frac{1}{%
p}}\left( \int_{a+b-x}^{b}\left\vert f^{\prime \prime }(t)\right\vert
^{q}dt\right) ^{\frac{1}{q}}\right\}  \\
&\leq &\frac{1}{2\left( b-a\right) }\left\{ \left( \int_{a}^{x}\left(
t-a\right) ^{2p}dt\right) ^{\frac{1}{p}}\left( \int_{a+b-x}^{b}\left[ \frac{%
t-a}{x-a}\left\vert f^{\prime \prime }(x)\right\vert ^{q}+\frac{x-t}{x-a}%
\left\vert f^{\prime \prime }(a)\right\vert ^{q}\right] dt\right) ^{\frac{1}{%
q}}\right.  \\
&&+\left. \left( \int_{x}^{a+b-x}\left( t-\frac{a+b}{2}\right)
^{2p}dt\right) ^{\frac{1}{p}}\right.  \\
&&\left. \times \left( \int_{x}^{a+b-x}\left[ \frac{t-x}{a+b-2x}\left\vert
f^{\prime \prime }(a+b-x)\right\vert ^{q}+\frac{a+b-x-t}{a+b-2x}\left\vert
f^{\prime \prime }(x)\right\vert ^{q}\right] dt\right) ^{\frac{1}{q}}\right. 
\\
&&\left. +\left( \int_{a+b-x}^{b}\left( t-b\right) ^{2p}dt\right) ^{\frac{1}{%
p}}\left( \int_{a+b-x}^{b}\left[ \frac{t-a-b+x}{x-a}\left\vert f^{\prime
\prime }(b)\right\vert ^{q}+\frac{b-t}{x-a}\left\vert f^{\prime \prime
}(a+b-x)\right\vert ^{q}\right] dt\right) ^{\frac{1}{q}}\right\}  \\
&=&\frac{1}{2\left( b-a\right) }\left\{ \left( \frac{\left( x-a\right)
^{2p+1}}{\left( 2p+1\right) }\right) ^{\frac{1}{p}}\left( \frac{x-a}{2}%
\right) ^{\frac{1}{q}}\left( \left\vert f^{\prime \prime }(a)\right\vert
^{q}+\left\vert f^{\prime \prime }(x)\right\vert ^{q}\right) ^{\frac{1}{q}%
}\right.  \\
&&\left. +\left( \frac{2}{2p+1}\left( \frac{a+b}{2}-x\right) ^{2p+1}\right)
^{\frac{1}{p}}\left( \frac{a+b}{2}-x\right) ^{\frac{1}{q}}\left( \left\vert
f^{\prime \prime }(x)\right\vert ^{q}+\left\vert f^{\prime \prime
}(a+b-x)\right\vert ^{q}\right) ^{\frac{1}{q}}\right.  \\
&&\left. +\left( \frac{\left( x-a\right) ^{2p+1}}{\left( 2p+1\right) }%
\right) ^{\frac{1}{p}}\left( \frac{x-a}{2}\right) ^{\frac{1}{q}}\left(
\left\vert f^{\prime \prime }(a+b-x)\right\vert ^{q}+\left\vert f^{\prime
\prime }(b)\right\vert ^{q}\right) ^{\frac{1}{q}}\right\} .
\end{eqnarray*}%
When we arrange the statements above, we obtain the desired result.
\end{proof}

\begin{corollary}
\label{co 2.5} Let $f$ as in Theorem \ref{teo 2.2} . Additionally, if $%
f^{\prime }(x)=f^{\prime }(a+b-x)$, we have%
\begin{eqnarray*}
&&\left\vert \frac{1}{b-a}\int_{a}^{b}f(t)dt-\frac{1}{2}\left[ f(x)+f(a+b-x)%
\right] \right\vert \\
&\leq &\frac{1}{2^{1+\frac{1}{q}}\left( b-a\right) \left( 2p+1\right) ^{%
\frac{1}{p}}}\left[ \left( x-a\right) ^{3}\left( \left\vert f^{\prime \prime
}(a)\right\vert ^{q}+\left\vert f^{\prime \prime }(x)\right\vert ^{q}\right)
^{\frac{1}{q}}\right. \\
&&\left. +\frac{\left( a+b-2x\right) ^{3}}{4}\left( \left\vert f^{\prime
\prime }(x)\right\vert ^{q}+\left\vert f^{\prime \prime }(a+b-x)\right\vert
^{q}\right) ^{\frac{1}{q}}\right. \\
&&\left. +\left( x-a\right) ^{3}\left( \left\vert f^{\prime \prime
}(a+b-x)\right\vert ^{q}+\left\vert f^{\prime \prime }(b)\right\vert
^{q}\right) ^{\frac{1}{q}}\right] ,
\end{eqnarray*}%
for all $x\in \left[ a,\frac{a+b}{2}\right] .$
\end{corollary}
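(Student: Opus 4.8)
The plan is to derive this corollary directly from Theorem \ref{teo 2.2}, treating the additional hypothesis $f^{\prime }(x)=f^{\prime }(a+b-x)$ as the only new input. The crucial observation is that the left-hand side of the inequality in Theorem \ref{teo 2.2} contains the term $\frac{1}{2}\left( x-\frac{3a+b}{4}\right) \left[ f^{\prime }(x)-f^{\prime }(a+b-x)\right]$, and this entire term carries the factor $f^{\prime }(x)-f^{\prime }(a+b-x)$.

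First I would invoke Theorem \ref{teo 2.2} verbatim for our function $f$, which is admissible since the hypotheses of the corollary include all the hypotheses of the theorem ($f^{\prime }$ absolutely continuous, $f^{\prime \prime }\in L_{1}[a,b]$, $\left\vert f^{\prime \prime }\right\vert ^{q}$ convex, $q>1$). This produces the full inequality with the derivative term present on the left.

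Next I would substitute the extra assumption $f^{\prime }(x)=f^{\prime }(a+b-x)$ into the left-hand side. This forces $f^{\prime }(x)-f^{\prime }(a+b-x)=0$, so the coefficient $\frac{1}{2}\left( x-\frac{3a+b}{4}\right)$ multiplies zero and the whole correction term vanishes, regardless of the value of $x\in \left[ a,\frac{a+b}{2}\right]$. What remains on the left is precisely $\left\vert \frac{1}{b-a}\int_{a}^{b}f(t)dt-\frac{1}{2}\left[ f(x)+f(a+b-x)\right] \right\vert$, while the right-hand side is completely untouched by this substitution. Comparing with the stated right-hand side then closes the argument.

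There is essentially no obstacle here: the corollary is an immediate specialization, and the only thing to verify is that the eliminated term is genuinely the one the theorem carried. The single point worth stating explicitly is that the vanishing holds for every admissible $x$, so the reduced inequality is valid on the full interval $\left[ a,\frac{a+b}{2}\right]$ rather than at an isolated point.
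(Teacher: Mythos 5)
Your proof is correct and matches the paper's (implicit) argument exactly: the paper states Corollary \ref{co 2.5} without a separate proof precisely because it is the immediate specialization of Theorem \ref{teo 2.2} in which the hypothesis $f^{\prime}(x)=f^{\prime}(a+b-x)$ annihilates the term $\frac{1}{2}\left(x-\frac{3a+b}{4}\right)\left[f^{\prime}(x)-f^{\prime}(a+b-x)\right]$ on the left-hand side, leaving the right-hand side unchanged. Nothing further is needed.
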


\begin{corollary}
\label{co 2.6} In Corollary \ref{co 2.5}, if $f$ is symmetric function, $%
f(a+b-x)=f(x),$ we have 
\begin{eqnarray*}
&&\left\vert \frac{1}{b-a}\int_{a}^{b}f(t)dt-f(x)\right\vert \\
&\leq &\frac{1}{2^{1+\frac{1}{q}}\left( b-a\right) \left( 2p+1\right) ^{%
\frac{1}{p}}}\left[ \left( x-a\right) ^{3}\left( \left\vert f^{\prime \prime
}(a)\right\vert ^{q}+\left\vert f^{\prime \prime }(x)\right\vert ^{q}\right)
^{\frac{1}{q}}\right. \\
&&\left. +\frac{\left( a+b-2x\right) ^{3}}{4}\left( \left\vert f^{\prime
\prime }(x)\right\vert ^{q}+\left\vert f^{\prime \prime }(a+b-x)\right\vert
^{q}\right) ^{\frac{1}{q}}\right. \\
&&\left. +\left( x-a\right) ^{3}\left( \left\vert f^{\prime \prime
}(a+b-x)\right\vert ^{q}+\left\vert f^{\prime \prime }(b)\right\vert
^{q}\right) ^{\frac{1}{q}}\right] ,
\end{eqnarray*}%
for all $x\in \left[ a,\frac{a+b}{2}\right] .$
\end{corollary}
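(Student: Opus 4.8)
The plan is to specialize Corollary \ref{co 2.5}, which already incorporates the hypothesis $f^{\prime }(x)=f^{\prime }(a+b-x)$, by imposing the additional symmetry condition $f(a+b-x)=f(x)$. The crucial observation is that the right-hand side of Corollary \ref{co 2.5} depends only on the values of $\left\vert f^{\prime \prime }\right\vert ^{q}$ at the four points $a$, $x$, $a+b-x$, and $b$; it does not involve $f$ itself. Consequently the entire effect of the new hypothesis is confined to the expression inside the absolute value on the left.

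First I would recall the conclusion of Corollary \ref{co 2.5}: under the standing assumptions on $f$ together with $f^{\prime }(x)=f^{\prime }(a+b-x)$, one has
\[
\left\vert \frac{1}{b-a}\int_{a}^{b}f(t)\,dt-\frac{1}{2}\left[ f(x)+f(a+b-x)\right] \right\vert \leq \mathcal{R}(x),
\]
where $\mathcal{R}(x)$ abbreviates the right-hand side displayed in Corollary \ref{co 2.5}. Next I would substitute the symmetry relation $f(a+b-x)=f(x)$ into the averaged term, which gives
\[
\frac{1}{2}\left[ f(x)+f(a+b-x)\right] =\frac{1}{2}\left[ f(x)+f(x)\right] =f(x).
\]
The left-hand side therefore collapses to $\left\vert \frac{1}{b-a}\int_{a}^{b}f(t)\,dt-f(x)\right\vert$, while $\mathcal{R}(x)$ is left untouched, yielding precisely the claimed inequality for all $x\in \left[ a,\frac{a+b}{2}\right] $.

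There is no genuine obstacle here: the argument is a direct specialization and requires no fresh estimation beyond what was already carried out in the proof of Theorem \ref{teo 2.2}. The only point worth flagging is that the symmetry hypothesis, if read as holding for all $x$, would in fact also force $\left\vert f^{\prime \prime }(a+b-x)\right\vert =\left\vert f^{\prime \prime }(x)\right\vert $ and $\left\vert f^{\prime \prime }(b)\right\vert =\left\vert f^{\prime \prime }(a)\right\vert $ (by differentiating $f(a+b-t)=f(t)$ twice), so that $\mathcal{R}(x)$ could be consolidated still further. Since the statement presents $\mathcal{R}(x)$ in its unsimplified form, however, no such reduction is needed, and the proof is complete upon the single substitution above.
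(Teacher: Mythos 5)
Your proposal is correct and matches the paper's intended derivation exactly: the paper states Corollary \ref{co 2.6} without proof, precisely because it is the one-line specialization you carry out, namely substituting $f(a+b-x)=f(x)$ so that $\frac{1}{2}\left[ f(x)+f(a+b-x)\right]$ collapses to $f(x)$ while the right-hand side of Corollary \ref{co 2.5} is unchanged. Your side remark that full symmetry would give $f^{\prime \prime }(a+b-t)=f^{\prime \prime }(t)$ and permit consolidating the bound is also accurate, and you rightly note it is not needed for the statement as given.
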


\begin{corollary}
\label{co 2.7} In Corollary \ref{co 2.5}, if we choose $x=a$ we have%
\begin{equation*}
\left\vert \frac{1}{b-a}\int_{a}^{b}f(t)dt-\frac{f(a)+f(b)}{2}\right\vert
\leq \frac{\left( b-a\right) ^{2}}{2^{3+\frac{1}{q}}\left( 2p+1\right) ^{%
\frac{1}{p}}}\left[ \left\vert f^{\prime \prime }(a)\right\vert
^{q}+\left\vert f^{\prime \prime }(b)\right\vert ^{q}\right] ^{\frac{1}{q}}.
\end{equation*}
\end{corollary}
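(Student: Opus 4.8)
The plan is to obtain this directly by specializing Corollary \ref{co 2.5} to the endpoint $x=a$; no new estimate is required. First I would note that at $x=a$ the symmetry hypothesis $f'(x)=f'(a+b-x)$ of Corollary \ref{co 2.5} reduces to $f'(a)=f'(b)$, so the left-hand side $\frac12[f(x)+f(a+b-x)]$ becomes $\frac{f(a)+f(b)}{2}$ (since $a+b-x=b$ when $x=a$), giving exactly the quantity $\left\vert\frac{1}{b-a}\int_a^b f(t)\,dt-\frac{f(a)+f(b)}{2}\right\vert$ that appears on the left of the claimed inequality.

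Next I would examine the three terms on the right-hand side of Corollary \ref{co 2.5}. The first and third terms both carry the factor $(x-a)^3$, which vanishes when $x=a$; hence only the middle term survives. In that middle term, substituting $x=a$ gives $a+b-2x=b-a$, so $(a+b-2x)^3=(b-a)^3$, while $\left\vert f''(x)\right\vert=\left\vert f''(a)\right\vert$ and $\left\vert f''(a+b-x)\right\vert=\left\vert f''(b)\right\vert$. Therefore the entire right-hand side collapses to
\[
\frac{1}{2^{1+\frac1q}(b-a)(2p+1)^{\frac1p}}\cdot\frac{(b-a)^3}{4}\left(\left\vert f''(a)\right\vert^q+\left\vert f''(b)\right\vert^q\right)^{\frac1q}.
\]

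Finally I would simplify the constant, which is the only arithmetic step: the factor $(b-a)^3/(b-a)=(b-a)^2$ and the powers of two combine as $2^{1+\frac1q}\cdot 4=2^{3+\frac1q}$, yielding the asserted bound $\frac{(b-a)^2}{2^{3+\frac1q}(2p+1)^{\frac1p}}\left(\left\vert f''(a)\right\vert^q+\left\vert f''(b)\right\vert^q\right)^{\frac1q}$. I do not anticipate a genuine obstacle here, since the content is entirely contained in Corollary \ref{co 2.5} (hence ultimately in Lemma \ref{lem 1.1}, the H\"{o}lder inequality, and convexity of $\left\vert f''\right\vert^q$). The only points to watch are the bookkeeping of the power of $2$ and the observation that the endpoint value $x=a$ is admissible because it lies in $\left[a,\frac{a+b}{2}\right]$ and the symmetry condition there amounts merely to $f'(a)=f'(b)$.
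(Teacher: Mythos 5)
Your proposal is correct and coincides with the paper's (implicit) proof: Corollary \ref{co 2.7} is obtained exactly by setting $x=a$ in Corollary \ref{co 2.5}, whereupon the two $(x-a)^3$ terms vanish, the middle term gives $\frac{(b-a)^3}{4}\left(\left\vert f''(a)\right\vert^q+\left\vert f''(b)\right\vert^q\right)^{1/q}$, and $2^{1+\frac{1}{q}}\cdot 4=2^{3+\frac{1}{q}}$ yields the stated constant. Your added remark that the inherited hypothesis reduces to $f'(a)=f'(b)$ at $x=a$ is a correct and worthwhile clarification of what the corollary tacitly assumes.
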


\begin{corollary}
\label{co 2.8} In Theorem \ref{teo 2.2} , if we choose
\end{corollary}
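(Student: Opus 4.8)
The plan is to derive this corollary as a direct specialization of Theorem \ref{teo 2.2}, in exact parallel with the way Corollary \ref{co 2.4} was obtained from Theorem \ref{teo 2.1}. The natural completion of the statement therefore takes $x=\frac{3a+b}{4}$, and I would simply substitute this value into the inequality of Theorem \ref{teo 2.2} and simplify both sides. No hypotheses beyond those of Theorem \ref{teo 2.2} are required, so the entire argument is a substitution followed by arithmetic.

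First I would record the elementary identities produced by the choice $x=\frac{3a+b}{4}$, namely $x-a=\frac{b-a}{4}$, $a+b-x=\frac{a+3b}{4}$, and $a+b-2x=\frac{b-a}{2}$. The decisive feature is that $x-\frac{3a+b}{4}=0$, so the whole term $\frac{1}{2}\left( x-\frac{3a+b}{4}\right) \left[ f^{\prime }(x)-f^{\prime }(a+b-x)\right]$ vanishes identically, and the left-hand side collapses to
\[
\left\vert \frac{1}{b-a}\int_{a}^{b}f(t)\,dt-\frac{1}{2}\left[ f\left( \frac{3a+b}{4}\right) +f\left( \frac{a+3b}{4}\right) \right] \right\vert ,
\]
with no derivative contribution remaining.

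Next I would evaluate the cubes on the right-hand side: $\left( x-a\right) ^{3}=\frac{\left( b-a\right) ^{3}}{64}$ and $\frac{\left( a+b-2x\right) ^{3}}{4}=\frac{\left( b-a\right) ^{3}}{32}$. Factoring $\frac{\left( b-a\right) ^{3}}{64}$ out of the bracket gives the two end summands coefficient $1$ and the middle summand coefficient $2$; cancelling one power of $(b-a)$ against the $\frac{1}{b-a}$ prefactor leaves $\left( b-a\right) ^{2}$ out front, and merging the remaining constant via $64\cdot 2^{1+\frac{1}{q}}=2^{7+\frac{1}{q}}$ yields the claimed bound
\[
\frac{\left( b-a\right) ^{2}}{2^{7+\frac{1}{q}}\left( 2p+1\right) ^{\frac{1}{p}}}\left[ \left( \left\vert f^{\prime \prime }(a)\right\vert ^{q}+\left\vert f^{\prime \prime }\left( \frac{3a+b}{4}\right) \right\vert ^{q}\right) ^{\frac{1}{q}}+2\left( \left\vert f^{\prime \prime }\left( \frac{3a+b}{4}\right) \right\vert ^{q}+\left\vert f^{\prime \prime }\left( \frac{a+3b}{4}\right) \right\vert ^{q}\right) ^{\frac{1}{q}}+\left( \left\vert f^{\prime \prime }\left( \frac{a+3b}{4}\right) \right\vert ^{q}+\left\vert f^{\prime \prime }(b)\right\vert ^{q}\right) ^{\frac{1}{q}}\right] .
\]

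Since every step is a substitution followed by routine simplification, there is no genuine obstacle here; the only point demanding care is the bookkeeping of the powers of $2$ when the factor $2^{1+\frac{1}{q}}$ inherited from Theorem \ref{teo 2.2} is merged with the $\frac{1}{64}$ and $\frac{1}{32}$ coming from the cubes, and when tracking the relative factor of $2$ that the middle summand acquires.
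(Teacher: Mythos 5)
Your treatment of the case $x=\frac{3a+b}{4}$ is correct and is exactly the argument the paper intends (the paper offers no written proof; the corollary is a bare specialization of Theorem \ref{teo 2.2}): the identities $x-a=\frac{b-a}{4}$, $a+b-2x=\frac{b-a}{2}$, the vanishing of $\frac{1}{2}\left(x-\frac{3a+b}{4}\right)\left[f^{\prime}(x)-f^{\prime}(a+b-x)\right]$ because $x-\frac{3a+b}{4}=0$, and the merge $64\cdot 2^{1+\frac{1}{q}}=2^{7+\frac{1}{q}}$ all check out, including the relative factor $2$ on the middle summand coming from $\frac{(a+b-2x)^{3}}{4}=\frac{(b-a)^{3}}{32}$.

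There is, however, a gap in coverage: in the paper this corollary enumerates \emph{two} choices of $x$, and you have proved only the second. The first item takes $x=\frac{a+b}{2}$ and asserts
\begin{equation*}
\left\vert \frac{1}{b-a}\int_{a}^{b}f(t)dt-f\left( \frac{a+b}{2}\right)\right\vert \leq \frac{\left( b-a\right)^{2}}{2^{4+\frac{1}{q}}\left( 2p+1\right)^{\frac{1}{p}}}\left[ \left( \left\vert f^{\prime \prime }(a)\right\vert^{q}+\left\vert f^{\prime \prime }\left( \frac{a+b}{2}\right) \right\vert^{q}\right)^{\frac{1}{q}}+\left( \left\vert f^{\prime \prime }\left( \frac{a+b}{2}\right) \right\vert^{q}+\left\vert f^{\prime \prime }(b)\right\vert^{q}\right)^{\frac{1}{q}}\right],
\end{equation*}
and notably the mechanism by which the derivative term disappears is different from the case you treated: here $x-\frac{3a+b}{4}=\frac{b-a}{4}\neq 0$, and the term vanishes instead because $f^{\prime}(x)-f^{\prime}(a+b-x)=f^{\prime}\left(\frac{a+b}{2}\right)-f^{\prime}\left(\frac{a+b}{2}\right)=0$; also $f(x)=f(a+b-x)=f\left(\frac{a+b}{2}\right)$ collapses the average on the left. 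On the right, the middle summand drops out since $a+b-2x=0$, while $\left(x-a\right)^{3}=\frac{\left(b-a\right)^{3}}{8}$ yields the constant via $8\cdot 2^{1+\frac{1}{q}}=2^{4+\frac{1}{q}}$. Supplying this second (in fact, first) substitution, which is routine but relies on a different cancellation than the one you identified, would complete the proof of the corollary as stated in the paper.
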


\begin{enumerate}
\item $x=\frac{a+b}{2},$ we have%
\begin{eqnarray*}
&&\left\vert \frac{1}{b-a}\int_{a}^{b}f(t)dt-f\left( \frac{a+b}{2}\right)
\right\vert \\
&\leq &\frac{\left( b-a\right) ^{2}}{2^{4+\frac{1}{q}}\left( 2p+1\right) ^{%
\frac{1}{p}}}\left[ \left( \left\vert f^{\prime \prime }(a)\right\vert
^{q}+\left\vert f^{\prime \prime }\left( \frac{a+b}{2}\right) \right\vert
^{q}\right) ^{\frac{1}{q}}\right. \\
&&\left. \left( \left\vert f^{\prime \prime }\left( \frac{a+b}{2}\right)
\right\vert ^{q}+\left\vert f^{\prime \prime }(b)\right\vert ^{q}\right) ^{%
\frac{1}{q}}\right] .
\end{eqnarray*}

\item $x=\frac{3a+b}{4},$ we have%
\begin{eqnarray*}
&&\left\vert \frac{1}{b-a}\int_{a}^{b}f(t)dt-\frac{1}{2}\left[ f\left( \frac{%
3a+b}{4}\right) +f\left( \frac{a+3b}{4}\right) \right] \right\vert \\
&\leq &\frac{\left( b-a\right) ^{2}}{2^{7+\frac{1}{q}}\left( 2p+1\right) ^{%
\frac{1}{p}}}\left\{ \left( \left\vert f^{\prime \prime }(a)\right\vert
^{q}+\left\vert f^{\prime \prime }\left( \frac{3a+b}{4}\right) \right\vert
^{q}\right) ^{\frac{1}{q}}\right. \\
&&\left. +2\left( \left\vert f^{\prime \prime }\left( \frac{3a+b}{4}\right)
\right\vert ^{q}+\left\vert f^{\prime \prime }\left( \frac{a+3b}{4}\right)
\right\vert ^{q}\right) ^{\frac{1}{q}}\right. \\
&&\left. +\left( \left\vert f^{\prime \prime }\left( \frac{a+3b}{4}\right)
\right\vert ^{q}+\left\vert f^{\prime \prime }(b)\right\vert ^{q}\right) ^{%
\frac{1}{q}}\right\} .
\end{eqnarray*}
\end{enumerate}

\begin{remark}
\label{rem 2.0} Using the well-known power-mean integral inequality one may
get inequalities for functions whose second derivatives absolute value are
convex. The details are omitted.
\end{remark}

We obtain the following result for concave functions.

\begin{theorem}
\label{teo 2.3} Let $f:[a,b]\rightarrow 
%TCIMACRO{\U{211d} }%
%BeginExpansion
\mathbb{R}
%EndExpansion
$ be a function such that $f^{\prime }$ is absolutely continuous on $[a,b],$ 
$f^{\prime \prime }\in L_{1}[a,b].$ If $\left\vert f^{\prime \prime
}\right\vert ^{q}$ is concave on $[a,b],$ for all $x\in \left[ a,\frac{a+b}{2%
}\right] $ and $q>1,$ then we have the following inequality:%
\begin{eqnarray*}
&&\left\vert \frac{1}{b-a}\int_{a}^{b}f(t)dt-\frac{1}{2}\left[ f(x)+f(a+b-x)%
\right] \right. \\
&&\left. +\frac{1}{2}\left( x-\frac{3a+b}{4}\right) \left[ f^{\prime
}(x)-f^{\prime }(a+b-x)\right] \right\vert \\
&\leq &\frac{1}{2\left( b-a\right) \left( 2p+1\right) ^{\frac{1}{p}}}\left[
\left( x-a\right) ^{3}\left\vert f^{\prime \prime }\left( \frac{x+a}{2}%
\right) \right\vert \right. \\
&&\left. +\frac{\left( a+b-2x\right) ^{3}}{4}\left\vert f^{\prime \prime
}\left( \frac{a+b}{2}\right) \right\vert +\left( x-a\right) ^{3}\left\vert
f^{\prime \prime }\left( \frac{a+2b-x}{2}\right) \right\vert \right]
\end{eqnarray*}%
where $\frac{1}{p}+\frac{1}{q}=1.$
\end{theorem}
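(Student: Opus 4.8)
The plan is to follow the same skeleton as the proof of Theorem \ref{teo 2.2}, but to replace the pointwise convexity bound by the integral form of Jensen's inequality adapted to concave functions. First I would invoke Lemma \ref{lem 1.1}, take absolute values, and use the triangle inequality to pass to the three integrals $\int_a^x(t-a)^2|f''(t)|\,dt$, $\int_x^{a+b-x}(t-\frac{a+b}{2})^2|f''(t)|\,dt$ and $\int_{a+b-x}^b(t-b)^2|f''(t)|\,dt$, all carrying the common factor $\frac{1}{2(b-a)}$. To each I would apply H\"{o}lder's inequality with exponents $p$ and $q$, separating the polynomial weight from $|f''|$.

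Next I would evaluate the three polynomial factors. A direct computation gives $\int_a^x(t-a)^{2p}\,dt=\frac{(x-a)^{2p+1}}{2p+1}$ and $\int_{a+b-x}^b(t-b)^{2p}\,dt=\frac{(x-a)^{2p+1}}{2p+1}$, the two outer weights agreeing by the symmetry $b-(a+b-x)=x-a$, while $\int_x^{a+b-x}(t-\frac{a+b}{2})^{2p}\,dt=\frac{2}{2p+1}(\frac{a+b}{2}-x)^{2p+1}$ since the middle interval is symmetric about $\frac{a+b}{2}$.

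The key step, and the one place where concavity enters, is to control each remaining factor $(\int_c^d|f''(t)|^q\,dt)^{1/q}$. Because $|f''|^q$ is concave, Jensen's integral inequality yields $\frac{1}{d-c}\int_c^d|f''(t)|^q\,dt\le|f''(\frac{c+d}{2})|^q$, hence $\int_c^d|f''(t)|^q\,dt\le(d-c)|f''(\frac{c+d}{2})|^q$. Applying this on $[a,x]$, $[x,a+b-x]$ and $[a+b-x,b]$ produces exactly the midpoint values $\frac{a+x}{2}$, $\frac{a+b}{2}$ and $\frac{a+2b-x}{2}$. This is the main conceptual obstacle: one must recognize that for a concave integrand Jensen collapses each integral to a single midpoint evaluation, in contrast to the convex-combination estimate over the endpoints used in Theorem \ref{teo 2.2}.

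Finally I would collect the factors and simplify the exponents. For the two outer terms the powers combine as $(x-a)^{(2p+1)/p}\cdot(x-a)^{1/q}=(x-a)^{2+1/p+1/q}=(x-a)^3$, using $\frac{1}{p}+\frac{1}{q}=1$. For the middle term the analogous computation, after writing $a+b-2x=2(\frac{a+b}{2}-x)$ and tracking the resulting powers of $2$, yields the factor $\frac{(a+b-2x)^3}{4}$. Multiplying through by $\frac{1}{2(b-a)(2p+1)^{1/p}}$ then gives precisely the claimed inequality; the only real care required is the bookkeeping of the constant $\frac14$ in the middle term.
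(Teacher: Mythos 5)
Your proposal is correct and follows essentially the same route as the paper: Lemma \ref{lem 1.1} with the triangle inequality, H\"{o}lder with exponents $p,q$ isolating the weights $(t-a)^{2p}$, $\left(t-\frac{a+b}{2}\right)^{2p}$, $(t-b)^{2p}$, then Jensen's inequality for the concave $\left\vert f''\right\vert^{q}$ collapsing each of $\int_{a}^{x}$, $\int_{x}^{a+b-x}$, $\int_{a+b-x}^{b}$ to the midpoint values $\frac{a+x}{2}$, $\frac{a+b}{2}$, $\frac{a+2b-x}{2}$, followed by the same exponent bookkeeping via $\frac{1}{p}+\frac{1}{q}=1$ yielding $(x-a)^{3}$ and $\frac{(a+b-2x)^{3}}{4}$. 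The only cosmetic difference is that the paper first reparametrizes each $\int\left\vert f''\right\vert^{q}$ over $[0,1]$ before invoking Jensen, whereas you apply Jensen directly on each subinterval; the two formulations are identical.
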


\begin{proof}
From Lemma \ref{lem 1.1} and using H\"{o}lder inequality, we have%
\begin{eqnarray*}
&&\left\vert \frac{1}{b-a}\int_{a}^{b}f(t)dt-\frac{1}{2}\left[ f(x)+f(a+b-x)%
\right] \right. \\
&&\left. +\frac{1}{2}\left( x-\frac{3a+b}{4}\right) \left[ f^{\prime
}(x)-f^{\prime }(a+b-x)\right] \right\vert \\
&\leq &\frac{1}{2\left( b-a\right) }\left\{ \left( \int_{a}^{x}\left(
t-a\right) ^{2p}dt\right) ^{\frac{1}{p}}\left( \int_{a}^{x}\left\vert
f^{\prime \prime }(t)\right\vert ^{q}dt\right) ^{\frac{1}{q}}\right. \\
&&\left. +\left( \int_{x}^{a+b-x}\left( t-\frac{a+b}{2}\right)
^{2p}dt\right) ^{\frac{1}{p}}\left( \int_{x}^{a+b-x}\left\vert f^{\prime
\prime }(t)\right\vert ^{q}dt\right) ^{\frac{1}{q}}\right. \\
&&\left. +\left( \int_{a+b-x}^{b}\left( t-b\right) ^{2p}dt\right) ^{\frac{1}{%
p}}\left( \int_{a+b-x}^{b}\left\vert f^{\prime \prime }(t)\right\vert
^{q}dt\right) ^{\frac{1}{q}}\right\} .
\end{eqnarray*}%
Let us write, 
\begin{equation*}
\int_{a}^{x}\left\vert f^{\prime \prime }(t)\right\vert ^{q}dt=\left(
x-a\right) \int_{0}^{1}\left\vert f^{\prime \prime }\left( \lambda x+\left(
1-\lambda \right) a\right) \right\vert ^{q}d\lambda ,
\end{equation*}%
\begin{equation*}
\int_{x}^{a+b-x}\left\vert f^{\prime \prime }(t)\right\vert ^{q}dt=\left(
a+b-2x\right) \int_{0}^{1}\left\vert f^{\prime \prime }\left( \lambda \left(
a+b-x\right) +\left( 1-\lambda \right) x\right) \right\vert ^{q}d\lambda
\end{equation*}%
and%
\begin{equation*}
\int_{a+b-x}^{b}\left\vert f^{\prime \prime }(t)\right\vert ^{q}dt=\left(
x-a\right) \int_{0}^{1}\left\vert f^{\prime \prime }\left( \lambda b+\left(
1-\lambda \right) \left( a+b-x\right) \right) \right\vert ^{q}d\lambda .
\end{equation*}%
Since $\left\vert f^{\prime \prime }\right\vert ^{q}$ is concave on $[a,b],$
we use the Jensen integral inequality to obtain%
\begin{eqnarray*}
&&\left( x-a\right) \int_{0}^{1}\left\vert f^{\prime \prime }\left( \lambda
x+\left( 1-\lambda \right) a\right) \right\vert ^{q}d\lambda \\
&=&\left( x-a\right) \int_{0}^{1}\lambda ^{0}\left\vert f^{\prime \prime
}\left( \lambda x+\left( 1-\lambda \right) a\right) \right\vert ^{q}d\lambda
\\
&\leq &\left( x-a\right) \left( \int_{0}^{1}\lambda ^{0}d\lambda \right)
\left\vert f^{\prime \prime }\left( \frac{1}{\int_{0}^{1}\lambda
^{0}d\lambda }\int_{0}^{1}\left( \lambda x+\left( 1-\lambda \right) a\right)
d\lambda \right) \right\vert ^{q} \\
&=&\left( x-a\right) \left\vert f^{\prime \prime }\left( \frac{x+a}{2}%
\right) \right\vert ^{q}
\end{eqnarray*}%
and analogously%
\begin{equation*}
\left( a+b-2x\right) \int_{0}^{1}\left\vert f^{\prime \prime }\left( \lambda
\left( a+b-x\right) +\left( 1-\lambda \right) x\right) \right\vert
^{q}d\lambda \leq \left( a+b-2x\right) \left\vert f^{\prime \prime }\left( 
\frac{a+b}{2}\right) \right\vert ^{q},
\end{equation*}%
\begin{equation*}
\left( x-a\right) \int_{0}^{1}\left\vert f^{\prime \prime }\left( \lambda
b+\left( 1-\lambda \right) \left( a+b-x\right) \right) \right\vert
^{q}d\lambda \leq \left( x-a\right) \left\vert f^{\prime \prime }\left( 
\frac{a+2b-x}{2}\right) \right\vert ^{q}.
\end{equation*}%
Combining all above inequalities, we obtain%
\begin{eqnarray*}
&&\left\vert \frac{1}{b-a}\int_{a}^{b}f(t)dt-\frac{1}{2}\left[ f(x)+f(a+b-x)%
\right] \right. \\
&&\left. +\frac{1}{2}\left( x-\frac{3a+b}{4}\right) \left[ f^{\prime
}(x)-f^{\prime }(a+b-x)\right] \right\vert \\
&\leq &\frac{1}{2\left( b-a\right) }\left\{ \left( \frac{\left( x-a\right)
^{2p+1}}{\left( 2p+1\right) }\right) ^{\frac{1}{p}}\left( x-a\right) ^{\frac{%
1}{q}}\left\vert f^{\prime \prime }\left( \frac{x+a}{2}\right) \right\vert
\right. \\
&&\left. +\left( \frac{2}{2p+1}\left( \frac{a+b}{2}-x\right) ^{2p+1}\right)
^{\frac{1}{p}}\left( a+b-2x\right) ^{\frac{1}{q}}\left\vert f^{\prime \prime
}\left( \frac{a+b}{2}\right) \right\vert \right. \\
&&\left. +\left( \frac{\left( x-a\right) ^{2p+1}}{\left( 2p+1\right) }%
\right) ^{\frac{1}{p}}\left( x-a\right) ^{\frac{1}{q}}\left\vert f^{\prime
\prime }\left( \frac{a+2b-x}{2}\right) \right\vert \right\} \\
&\leq &\frac{1}{2\left( b-a\right) \left( 2p+1\right) ^{\frac{1}{p}}}\left[
\left( x-a\right) ^{3}\left\vert f^{\prime \prime }\left( \frac{x+a}{2}%
\right) \right\vert \right. \\
&&\left. +\frac{\left( a+b-2x\right) ^{3}}{4}\left\vert f^{\prime \prime
}\left( \frac{a+b}{2}\right) \right\vert +\left( x-a\right) ^{3}\left\vert
f^{\prime \prime }\left( \frac{a+2b-x}{2}\right) \right\vert \right]
\end{eqnarray*}%
for all $x\in \left[ a,\frac{a+b}{2}\right] $ and $\frac{1}{p}+\frac{1}{q}%
=1. $
\end{proof}

\begin{corollary}
\label{co 2.9} Let $f$ as in Theorem \ref{teo 2.3}. Additionally, if $%
f^{\prime }(x)=f^{\prime }(a+b-x)$, we have%
\begin{eqnarray*}
&&\left\vert \frac{1}{b-a}\int_{a}^{b}f(t)dt-\frac{1}{2}\left[ f(x)+f(a+b-x)%
\right] \right\vert \\
&\leq &\frac{1}{2\left( b-a\right) \left( 2p+1\right) ^{\frac{1}{p}}}\left[
\left( x-a\right) ^{3}\left( \left\vert f^{\prime \prime }\left( \frac{x+a}{2%
}\right) \right\vert +\left\vert f^{\prime \prime }\left( \frac{a+2b-x}{2}%
\right) \right\vert \right) \right. \\
&&\left. +\frac{\left( a+b-2x\right) ^{3}}{4}\left\vert f^{\prime \prime
}\left( \frac{a+b}{2}\right) \right\vert \right] ,
\end{eqnarray*}%
for all $x\in \left[ a,\frac{a+b}{2}\right] .$
\end{corollary}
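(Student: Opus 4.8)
The plan is to obtain this corollary as an immediate specialization of Theorem \ref{teo 2.3}, so essentially no new analysis is required. First I would write down the conclusion of Theorem \ref{teo 2.3} verbatim for the given $f$ and for an arbitrary $x \in [a, \frac{a+b}{2}]$; all of its hypotheses (namely $f'$ absolutely continuous, $f'' \in L_1[a,b]$, $|f''|^q$ concave, and $q > 1$) are assumed here as well, so the inequality holds without any further justification.

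The key observation is that the additional hypothesis $f'(x) = f'(a+b-x)$ forces $f'(x) - f'(a+b-x) = 0$. Consequently the term $\frac{1}{2}\left(x - \frac{3a+b}{4}\right)[f'(x) - f'(a+b-x)]$ that appears inside the absolute value on the left-hand side of Theorem \ref{teo 2.3} vanishes identically. The left-hand side therefore collapses to $\left| \frac{1}{b-a}\int_a^b f(t)\,dt - \frac{1}{2}[f(x) + f(a+b-x)] \right|$, which is precisely the quantity estimated in the corollary.

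The right-hand side of Theorem \ref{teo 2.3} is unaffected by this substitution and so carries over unchanged. The only remaining step is cosmetic: the first and third summands there both carry the factor $(x-a)^3$, namely $(x-a)^3 |f''(\frac{x+a}{2})|$ and $(x-a)^3 |f''(\frac{a+2b-x}{2})|$, so I would factor $(x-a)^3$ out of the two and combine them into the single bracketed sum $(x-a)^3\left( |f''(\frac{x+a}{2})| + |f''(\frac{a+2b-x}{2})| \right)$, while leaving the middle term $\frac{(a+b-2x)^3}{4}|f''(\frac{a+b}{2})|$ as is. This produces exactly the stated bound. I do not anticipate any genuine obstacle here: the entire analytic content is already contained in Theorem \ref{teo 2.3}, and the corollary is simply the case in which the symmetry condition on $f'$ annihilates the derivative-correction term, after which the regrouping is purely algebraic.
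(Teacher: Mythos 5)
Your proposal is correct and follows exactly the route the paper intends: the corollary is stated without separate proof precisely because, as you observe, the hypothesis $f^{\prime }(x)=f^{\prime }(a+b-x)$ annihilates the term $\frac{1}{2}\left( x-\frac{3a+b}{4}\right) \left[ f^{\prime }(x)-f^{\prime }(a+b-x)\right]$ in Theorem \ref{teo 2.3}, after which only the purely algebraic regrouping of the two $(x-a)^{3}$ summands remains. Nothing is missing.
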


\begin{corollary}
\label{co 2.10} In Corollary \ref{co 2.8}, if $f$ is symmetric function, $%
f(a+b-x)=f(x),$ we have 
\begin{eqnarray*}
&&\left\vert \frac{1}{b-a}\int_{a}^{b}f(t)dt-f(x)\right\vert \\
&\leq &\frac{1}{2\left( b-a\right) \left( 2p+1\right) ^{\frac{1}{p}}}\left[
\left( x-a\right) ^{3}\left( \left\vert f^{\prime \prime }\left( \frac{x+a}{2%
}\right) \right\vert +\left\vert f^{\prime \prime }\left( \frac{a+2b-x}{2}%
\right) \right\vert \right) \right. \\
&&\left. +\frac{\left( a+b-2x\right) ^{3}}{4}\left\vert f^{\prime \prime
}\left( \frac{a+b}{2}\right) \right\vert \right] ,
\end{eqnarray*}%
for all $x\in \left[ a,\frac{a+b}{2}\right] .$
\end{corollary}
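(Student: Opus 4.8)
The plan is to read off this statement as a direct specialization of Corollary~\ref{co 2.9}, which is its natural predecessor in the concave branch (the cross-reference should be read as Corollary~\ref{co 2.9}). That corollary already carries the hypothesis $f^{\prime}(x)=f^{\prime}(a+b-x)$, so the term $\frac{1}{2}\left(x-\frac{3a+b}{4}\right)\left[f^{\prime}(x)-f^{\prime}(a+b-x)\right]$ has already vanished from its left-hand side, and the right-hand side displayed there is verbatim the bound claimed here. Hence the entire task collapses to simplifying the surviving left-hand side under the added symmetry assumption, with no re-estimation required.

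First I would record the effect of the symmetry $f(a+b-x)=f(x)$ on the averaged function values, namely
\[
\frac{1}{2}\left[f(x)+f(a+b-x)\right]=\frac{1}{2}\cdot 2f(x)=f(x).
\]
Substituting this into the left-hand side of Corollary~\ref{co 2.9} turns the quantity inside the absolute value into $\frac{1}{b-a}\int_{a}^{b}f(t)\,dt-f(x)$, while the right-hand side is carried over unchanged. This already yields the asserted inequality, valid on the same range $x\in\left[a,\frac{a+b}{2}\right]$ as in Corollary~\ref{co 2.9}.

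I do not expect any genuine analytic obstacle here: the quantitative estimate is inherited wholesale, and nothing from Theorem~\ref{teo 2.3} or from the Jensen integral inequality needs to be re-derived. The only point meriting a moment's attention is the bookkeeping of the two structural hypotheses that are simultaneously in force — the derivative identity $f^{\prime}(x)=f^{\prime}(a+b-x)$ inherited from Corollary~\ref{co 2.9} together with the symmetry $f(a+b-x)=f(x)$ imposed here — so that one simplifies the \emph{already-reduced} left-hand side of Corollary~\ref{co 2.9} rather than restarting from the full three-integral identity of Lemma~\ref{lem 1.1}.
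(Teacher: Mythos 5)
Your proposal is correct and matches the paper's (implicit) argument exactly: the reference to Corollary \ref{co 2.8} is indeed a typo for Corollary \ref{co 2.9}, and the corollary follows by the same substitution $\frac{1}{2}\left[f(x)+f(a+b-x)\right]=f(x)$ that the paper uses in the parallel convex-branch step from Corollary \ref{co 2.5} to Corollary \ref{co 2.6}. No re-estimation from Theorem \ref{teo 2.3} or Lemma \ref{lem 1.1} is needed, just as you say.
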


\begin{corollary}
\label{c0 2.11} In Theorem \ref{teo 2.3}, if we choose $x=\frac{3a+b}{4},$
we have%
\begin{eqnarray*}
&&\left\vert \frac{1}{b-a}\int_{a}^{b}f(t)dt-\frac{1}{2}\left[ f\left( \frac{%
3a+b}{4}\right) +f\left( \frac{a+3b}{4}\right) \right] \right\vert \\
&\leq &\frac{\left( b-a\right) ^{2}}{128\left( 2p+1\right) ^{\frac{1}{p}}}%
\left[ \left\vert f^{\prime \prime }\left( \frac{7a+b}{8}\right) \right\vert
+2\left\vert f^{\prime \prime }\left( \frac{a+b}{2}\right) \right\vert
+\left\vert f^{\prime \prime }\left( \frac{a+7b}{8}\right) \right\vert %
\right] .
\end{eqnarray*}
\end{corollary}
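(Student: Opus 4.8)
The plan is to obtain this corollary as a direct specialization of Theorem \ref{teo 2.3}, taking the free parameter to be $x=\frac{3a+b}{4}$, which lies in $\left[a,\frac{a+b}{2}\right]$ so that the hypotheses of Theorem \ref{teo 2.3} remain in force. No new estimation is needed; the entire task is to substitute this value into both sides of the inequality of Theorem \ref{teo 2.3} and simplify.

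First I would record the effect of the choice on the left-hand side. Since $x-\frac{3a+b}{4}=0$, the correction term $\frac{1}{2}\left(x-\frac{3a+b}{4}\right)\left[f'(x)-f'(a+b-x)\right]$ vanishes identically, so no auxiliary assumption such as $f'(x)=f'(a+b-x)$ is needed here. Because $a+b-x=\frac{a+3b}{4}$, the remaining part of the left-hand side collapses to $\left\vert\frac{1}{b-a}\int_a^b f(t)\,dt-\frac{1}{2}\left[f\!\left(\frac{3a+b}{4}\right)+f\!\left(\frac{a+3b}{4}\right)\right]\right\vert$, which is exactly the left-hand side claimed in the corollary.

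Next I would evaluate the geometric quantities on the right-hand side at $x=\frac{3a+b}{4}$: one has $x-a=\frac{b-a}{4}$, hence $(x-a)^3=\frac{(b-a)^3}{64}$, and $a+b-2x=\frac{b-a}{2}$, hence $\frac{(a+b-2x)^3}{4}=\frac{(b-a)^3}{32}$. The three arguments of $f''$ become $\frac{x+a}{2}=\frac{7a+b}{8}$, the midpoint $\frac{a+b}{2}$, and $\frac{a+2b-x}{2}=\frac{a+7b}{8}$. Substituting these into the bound of Theorem \ref{teo 2.3} and factoring $\frac{(b-a)^3}{64}$ out of the bracket, the prefactor becomes $\frac{1}{2(b-a)(2p+1)^{1/p}}\cdot\frac{(b-a)^3}{64}=\frac{(b-a)^2}{128(2p+1)^{1/p}}$, while the bracket reads $\left\vert f''\!\left(\frac{7a+b}{8}\right)\right\vert+2\left\vert f''\!\left(\frac{a+b}{2}\right)\right\vert+\left\vert f''\!\left(\frac{a+7b}{8}\right)\right\vert$, which is precisely the asserted inequality.

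Since the argument is pure substitution, there is no genuine mathematical obstacle. The only point demanding care is the bookkeeping of the numerical coefficients, in particular confirming that the middle term $\frac{(a+b-2x)^3}{4}=\frac{(b-a)^3}{32}$ equals $2\cdot\frac{(b-a)^3}{64}$, which is what produces the factor $2$ in front of $\left\vert f''\!\left(\frac{a+b}{2}\right)\right\vert$; a power or arithmetic slip there would corrupt the stated constant $\frac{1}{128}$.
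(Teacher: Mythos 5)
Your proposal is correct and matches the paper's (implicit) argument exactly: the corollary is a direct substitution $x=\frac{3a+b}{4}$ into Theorem \ref{teo 2.3}, and all of your computations — the vanishing of the term $\frac{1}{2}\left(x-\frac{3a+b}{4}\right)\left[f^{\prime}(x)-f^{\prime}(a+b-x)\right]$ without any symmetry assumption, $(x-a)^{3}=\frac{(b-a)^{3}}{64}$, $\frac{(a+b-2x)^{3}}{4}=\frac{(b-a)^{3}}{32}=2\cdot\frac{(b-a)^{3}}{64}$, the evaluation points $\frac{7a+b}{8}$, $\frac{a+b}{2}$, $\frac{a+7b}{8}$, and the resulting prefactor $\frac{(b-a)^{2}}{128(2p+1)^{1/p}}$ — check out.
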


\begin{remark}
\label{rem 2.1} In Theorem \ref{teo 2.3}, if we choose $x=\frac{a+b}{2},$ we
have the second inequality in Corollary \ref{co 1.1}.
\end{remark}

\section{applications for special means}

We consider the means for nonnegative real numbers $\alpha <\beta $ as
follows:

(1)\textbf{\ }The arithmetic mean:%
\begin{equation*}
A\left( \alpha ,\beta \right) =\frac{\alpha +\beta }{2},\text{ \ \ \ \ }%
\alpha ,\beta \in 
%TCIMACRO{\U{211d} }%
%BeginExpansion
\mathbb{R}
%EndExpansion
^{+}.
\end{equation*}

(2) The logarithmic mean:%
\begin{equation*}
L\left( \alpha ,\beta \right) =\frac{\beta -\alpha }{\ln \beta -\ln \alpha },%
\text{ \ \ \ \ }\alpha ,\beta \in 
%TCIMACRO{\U{211d} }%
%BeginExpansion
\mathbb{R}
%EndExpansion
^{+},\text{ }\alpha \neq \beta .
\end{equation*}

(3) The generalized logarithmic mean:%
\begin{equation*}
L_{n}\left( \alpha ,\beta \right) =\left[ \frac{\beta ^{n+1}-\alpha ^{n+1}}{%
\left( \beta -\alpha \right) \left( n+1\right) }\right] ^{\frac{1}{n}},\text{
\ \ \ \ }\alpha ,\beta \in 
%TCIMACRO{\U{211d} }%
%BeginExpansion
\mathbb{R}
%EndExpansion
^{+},\text{ }\alpha \neq \beta ,\text{ }n\in 
%TCIMACRO{\U{2124} }%
%BeginExpansion
\mathbb{Z}
%EndExpansion
\backslash \left\{ -1,0\right\} .
\end{equation*}

(4) The identric mean:%
\begin{equation*}
I\left( \alpha ,\beta \right) =\left\{ 
\begin{array}{ccc}
\frac{1}{e}\left( \frac{\beta ^{\beta }}{\alpha ^{\alpha }}\right) ^{\frac{1%
}{\beta -\alpha }}, &  & \alpha \neq \beta \\ 
&  &  \\ 
\alpha , &  & \text{\ }\alpha =\beta%
\end{array}%
\right. \alpha ,\beta \in 
%TCIMACRO{\U{211d} }%
%BeginExpansion
\mathbb{R}
%EndExpansion
^{+}.
\end{equation*}

Now, using the results of Section 2, we give some applications to special
means of real numbers.

\begin{proposition}
\label{prop 3.1} Let $a,b\in 
%TCIMACRO{\U{211d} }%
%BeginExpansion
\mathbb{R}
%EndExpansion
^{+},$ $a<b.$ Then we have%
\begin{equation*}
\left\vert L^{-1}\left( a,b\right) -A\left( \frac{4}{3a+b},\frac{4}{a+3b}%
\right) \right\vert \leq \frac{\left( b-a\right) ^{2}}{768}\left[ \frac{%
a^{3}+b^{3}}{a^{3}b^{3}}+448\left( \frac{1}{\left( 3a+b\right) ^{3}}+\frac{1%
}{\left( a+3b\right) ^{3}}\right) \right] .
\end{equation*}
\end{proposition}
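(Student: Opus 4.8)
The plan is to apply Corollary \ref{co 2.4} to the specific function $f:[a,b]\rightarrow \mathbb{R}$ given by $f(t)=\frac{1}{t}$, where $0<a<b$. First I would check the hypotheses. Since $f$ is smooth on $[a,b]\subset (0,\infty )$, its derivative $f^{\prime }$ is absolutely continuous and $f^{\prime \prime }\in L_{1}[a,b]$. Differentiating gives $f^{\prime }(t)=-t^{-2}$ and $f^{\prime \prime }(t)=2t^{-3}$, so that $\left\vert f^{\prime \prime }(t)\right\vert =2t^{-3}$. Because $\frac{d^{2}}{dt^{2}}\left( 2t^{-3}\right) =24t^{-5}>0$ for $t>0$, the function $\left\vert f^{\prime \prime }\right\vert $ is convex on $[a,b]$, and hence the assumptions of Theorem \ref{teo 2.1}, and therefore of Corollary \ref{co 2.4}, are satisfied.

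Next I would identify the terms of Corollary \ref{co 2.4} for this choice of $f$. On the left-hand side one has $\frac{1}{b-a}\int_{a}^{b}\frac{dt}{t}=\frac{\ln b-\ln a}{b-a}=L^{-1}(a,b)$, while $f\left( \frac{3a+b}{4}\right) =\frac{4}{3a+b}$ and $f\left( \frac{a+3b}{4}\right) =\frac{4}{a+3b}$, so that $\frac{1}{2}\left[ f\left( \frac{3a+b}{4}\right) +f\left( \frac{a+3b}{4}\right) \right] =A\left( \frac{4}{3a+b},\frac{4}{a+3b}\right) $. Thus the left-hand side of Corollary \ref{co 2.4} becomes precisely $\left\vert L^{-1}(a,b)-A\left( \frac{4}{3a+b},\frac{4}{a+3b}\right) \right\vert $, the quantity appearing in the claim.

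For the right-hand side I would substitute the values of $\left\vert f^{\prime \prime }\right\vert =2t^{-3}$ at the four nodes, namely $\left\vert f^{\prime \prime }(a)\right\vert =\frac{2}{a^{3}}$, $\left\vert f^{\prime \prime }(b)\right\vert =\frac{2}{b^{3}}$, and, using $\left( \frac{3a+b}{4}\right) ^{3}=\frac{(3a+b)^{3}}{64}$, also $\left\vert f^{\prime \prime }\left( \frac{3a+b}{4}\right) \right\vert =\frac{128}{(3a+b)^{3}}$ and $\left\vert f^{\prime \prime }\left( \frac{a+3b}{4}\right) \right\vert =\frac{128}{(a+3b)^{3}}$. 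Inserting these into the bracket $\left\vert f^{\prime \prime }(a)\right\vert +7\left\vert f^{\prime \prime }\left( \frac{3a+b}{4}\right) \right\vert +7\left\vert f^{\prime \prime }\left( \frac{a+3b}{4}\right) \right\vert +\left\vert f^{\prime \prime }(b)\right\vert $ and pulling out a common factor $2$ turns the prefactor $\frac{(b-a)^{2}}{1536}$ into $\frac{(b-a)^{2}}{768}$ and the coefficient $7\cdot 128=896$ into $448$; combining $\frac{1}{a^{3}}+\frac{1}{b^{3}}=\frac{a^{3}+b^{3}}{a^{3}b^{3}}$ then produces the asserted bound.

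There is no substantive obstacle here, since the argument is a direct specialization of Corollary \ref{co 2.4}. The only points requiring care are verifying that $\left\vert f^{\prime \prime }\right\vert =2t^{-3}$ is convex on the positive axis and tracking the factor $64$ arising from cubing the $4$ in the quarter-point arguments, which is exactly what converts the constants $1536$ and $896$ of the corollary into the $768$ and $448$ of the proposition.
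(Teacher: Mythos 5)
Your proof is correct and takes exactly the paper's route: the paper likewise proves Proposition \ref{prop 3.1} by applying Corollary \ref{co 2.4} to the mapping $f(x)=\frac{1}{x}$, stating this in a single line. Your verification of the hypotheses and the constant-tracking (namely $\left\vert f^{\prime\prime}\left(\tfrac{3a+b}{4}\right)\right\vert=\tfrac{128}{(3a+b)^{3}}$, the common factor $2$ converting $\tfrac{(b-a)^{2}}{1536}$ into $\tfrac{(b-a)^{2}}{768}$ and $7\cdot 128$ into $448$) simply makes explicit the computation the paper leaves to the reader, and it checks out.
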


\begin{proof}
The assertion follows from Corollary \ref{co 2.4} applied to the convex
mapping $f:\left[ a,b\right] \rightarrow 
%TCIMACRO{\U{211d} }%
%BeginExpansion
\mathbb{R}
%EndExpansion
,$ $f(x)=\frac{1}{x}.$
\end{proof}

\begin{proposition}
\label{prop 3.2} Let $a,b\in 
%TCIMACRO{\U{211d} }%
%BeginExpansion
\mathbb{R}
%EndExpansion
^{+},$ $a<b$ and $n\in 
%TCIMACRO{\U{2124} }%
%BeginExpansion
\mathbb{Z}
%EndExpansion
,$ $\left\vert n\right\vert \geq 2.$ Then we have%
\begin{equation*}
\left\vert L_{n}^{n}\left( a,b\right) -A^{n}\left( a,b\right) \right\vert
\leq \frac{n\left( n-1\right) \left( b-a\right) ^{2}}{192}\left[
a^{n-2}+6A^{n-2}\left( a,b\right) +b^{n-2}\right] .
\end{equation*}
\end{proposition}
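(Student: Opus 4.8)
The plan is to specialize our midpoint estimate, Corollary \ref{co 2.3}, to the power function $f(x)=x^{n}$ on $[a,b]$. The whole proposition is driven by the identity
\begin{equation*}
\frac{1}{b-a}\int_{a}^{b}x^{n}\,dx=\frac{b^{n+1}-a^{n+1}}{(b-a)(n+1)}=L_{n}^{n}(a,b),
\end{equation*}
together with $f\!\left(\frac{a+b}{2}\right)=\left(\frac{a+b}{2}\right)^{n}=A^{n}(a,b)$. Thus $\left\vert L_{n}^{n}(a,b)-A^{n}(a,b)\right\vert$ is exactly the left-hand side of Corollary \ref{co 2.3} for this choice of $f$; note that at $x=\frac{a+b}{2}$ one has $a+b-x=x$, so the averaging and derivative-difference terms collapse automatically and no genuine symmetry of $f$ is needed---only the convexity hypothesis on $\left\vert f^{\prime \prime }\right\vert$ inherited from Theorem \ref{teo 2.1}.

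I would then verify that hypothesis. Because $a,b\in \mathbb{R}^{+}$ with $a<b$, the interval $[a,b]$ stays away from the origin, so $f(x)=x^{n}$ is $C^{\infty }$ there for every integer $n$; in particular $f^{\prime }$ is absolutely continuous and $f^{\prime \prime }\in L_{1}[a,b]$. Writing $f^{\prime \prime }(x)=n(n-1)x^{n-2}$ and observing that $n(n-1)>0$ whenever $\left\vert n\right\vert \geq 2$, we get $\left\vert f^{\prime \prime }(x)\right\vert =n(n-1)x^{n-2}$ on $(0,\infty )$. Convexity of this map reduces to convexity of $x^{n-2}$, which is governed by the sign of $(n-2)(n-3)$: the product is positive for $n\geq 4$ and for $n\leq -2$, and it vanishes when $n\in \{2,3\}$, where $x^{n-2}$ is constant or linear and hence still convex. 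This sign bookkeeping---in particular the negative range $n\leq -2$, where both $n(n-1)$ and $(n-2)(n-3)$ must be tracked---is the only real obstacle, and it is precisely where the hypothesis $\left\vert n\right\vert \geq 2$ enters.

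The remainder is a direct substitution. Evaluating the second derivative at the three nodes gives $\left\vert f^{\prime \prime }(a)\right\vert =n(n-1)a^{n-2}$, $\left\vert f^{\prime \prime }(b)\right\vert =n(n-1)b^{n-2}$, and $\left\vert f^{\prime \prime }\!\left(\frac{a+b}{2}\right)\right\vert =n(n-1)A^{n-2}(a,b)$. Factoring the common $n(n-1)$ out of the bracket in Corollary \ref{co 2.3} converts its bound into $\frac{n(n-1)(b-a)^{2}}{192}\left[a^{n-2}+6A^{n-2}(a,b)+b^{n-2}\right]$, which is exactly the asserted inequality. Once the convexity of $\left\vert f^{\prime \prime }\right\vert$ is confirmed, there is nothing further to estimate.
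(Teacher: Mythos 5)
Your proposal is correct and follows exactly the route of the paper, whose entire proof is the one-line remark that the assertion follows from Corollary \ref{co 2.3} applied to $f(x)=x^{n}$. You merely fill in the details the paper leaves implicit---the identity $\frac{1}{b-a}\int_{a}^{b}x^{n}\,dx=L_{n}^{n}(a,b)$, the observation that at $x=\frac{a+b}{2}$ no symmetry of $f$ is needed, and the verification that $\left\vert f^{\prime\prime}(x)\right\vert =n(n-1)x^{n-2}$ is convex on $(0,\infty)$ for all $\left\vert n\right\vert \geq 2$---all of which check out.
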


\begin{proof}
The assertion follows from Corollary \ref{co 2.3} applied to the convex
mapping  $f:\left[ a,b\right] \rightarrow 
%TCIMACRO{\U{211d} }%
%BeginExpansion
\mathbb{R}
%EndExpansion
,$ $f(x)=x^{n}.$
\end{proof}

\begin{proposition}
\label{prop 3.3} Let $a,b\in 
%TCIMACRO{\U{211d} }%
%BeginExpansion
\mathbb{R}
%EndExpansion
^{+},$ $a<b.$ Then we have%
\begin{equation*}
\left\vert A\left( \ln a,\ln b\right) -\ln I\right\vert \leq \frac{\left(
b-a\right) ^{2}}{2^{3+\frac{1}{q}}\left( 2p+1\right) ^{\frac{1}{p}}}\left[ 
\frac{1}{a^{2q}}+\frac{1}{b^{2q}}\right] .
\end{equation*}
\end{proposition}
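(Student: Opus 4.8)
The plan is to derive this from Corollary \ref{co 2.7} applied to the mapping $f:[a,b]\to \mathbb{R}$ given by $f(x)=\ln x$, and then to rewrite the three ingredients of that inequality — the integral average, the endpoint average, and the second-derivative term — in terms of the special means in the statement. Since $a>0$, the derivative $f'(x)=1/x$ is absolutely continuous on $[a,b]$ and $f''(x)=-1/x^{2}\in L_{1}[a,b]$, so the basic regularity hypotheses hold. To legitimately invoke Corollary \ref{co 2.7} I must also check that $\left\vert f''\right\vert^{q}=x^{-2q}$ is convex on $[a,b]$; differentiating twice gives $\frac{d^{2}}{dx^{2}}x^{-2q}=2q(2q+1)x^{-2q-2}>0$, so the convexity holds for every $q>1$.

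Next I would evaluate the two averages. Using $\int \ln t\,dt=t\ln t-t$,
\begin{equation*}
\frac{1}{b-a}\int_{a}^{b}\ln t\,dt=\frac{b\ln b-a\ln a}{b-a}-1 ,
\end{equation*}
and comparing this with the definition $I=I(a,b)=\frac{1}{e}\left(\frac{b^{b}}{a^{a}}\right)^{1/(b-a)}$ of the identric mean yields $\frac{1}{b-a}\int_{a}^{b}\ln t\,dt=\ln I$. Likewise $\frac{f(a)+f(b)}{2}=\frac{\ln a+\ln b}{2}=A(\ln a,\ln b)$, so the left-hand side of Corollary \ref{co 2.7} becomes exactly $\left\vert A(\ln a,\ln b)-\ln I\right\vert$. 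On the right-hand side, $\left\vert f''(a)\right\vert^{q}=a^{-2q}$ and $\left\vert f''(b)\right\vert^{q}=b^{-2q}$, and substituting these into the constant $\frac{(b-a)^{2}}{2^{3+1/q}(2p+1)^{1/p}}$ produces the asserted bound.

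The step that will require the most care is the identification of the integral average with $\ln I$: it is simply a matter of computing $\int_{a}^{b}\ln t\,dt$ correctly and matching the result term-by-term against the logarithm of the identric mean, but it is where the whole translation into means hinges. Two further points deserve mention. The honest substitution of $\left\vert f''(a)\right\vert^{q}$ and $\left\vert f''(b)\right\vert^{q}$ into Corollary \ref{co 2.7} gives $\left[\frac{1}{a^{2q}}+\frac{1}{b^{2q}}\right]^{1/q}$, so the bracket on the right of the proposition is understood to carry the exponent $\frac{1}{q}$. Moreover, Corollary \ref{co 2.7} descends from Corollary \ref{co 2.5}, which carries the hypothesis $f'(x)=f'(a+b-x)$; at the chosen point $x=a$ this reads $f'(a)=f'(b)$, a condition that $f=\ln$ does not satisfy. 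To be rigorous one should therefore apply Theorem \ref{teo 2.2} directly at $x=a$ and track the additional term $\frac{1}{2}\left(a-\frac{3a+b}{4}\right)\left[f'(a)-f'(b)\right]$, which equals $-\frac{(b-a)^{2}}{8ab}$ for $f=\ln$; in the symmetric form stated, the proposition implicitly absorbs or suppresses this contribution.
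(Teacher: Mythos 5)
Your proposal follows the paper's own route exactly: the paper's entire proof is the one-line application of Corollary \ref{co 2.7} to the mapping $f(x)=-\ln x$, and your choice $f(x)=\ln x$ is equivalent, since the sign changes neither the left-hand side (an absolute value) nor $\left\vert f''\right\vert$. Your supporting computations (the integral mean of $\ln$ equals $\ln I$, and $\frac{d^{2}}{dx^{2}}x^{-2q}=2q(2q+1)x^{-2q-2}>0$ so $\left\vert f''\right\vert^{q}$ is convex) are correct. Moreover, your two caveats identify genuine defects that the paper's one-line proof silently ignores: Corollary \ref{co 2.7} actually produces $\left[a^{-2q}+b^{-2q}\right]^{1/q}$, so the proposition as printed has dropped the exponent $\frac{1}{q}$ on the bracket; and Corollary \ref{co 2.7} descends from Corollary \ref{co 2.5}, whose hypothesis $f'(x)=f'(a+b-x)$ reads $f'(a)=f'(b)$ at $x=a$ and fails for the logarithm ($1/a\neq 1/b$). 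Your repair — applying Theorem \ref{teo 2.2} directly at $x=a$ and tracking the extra term $\frac{1}{2}\left(a-\frac{3a+b}{4}\right)\left[f'(a)-f'(b)\right]=-\frac{(b-a)^{2}}{8ab}$ — is the honest version; note that via the triangle inequality this term does not vanish but adds $\frac{(b-a)^{2}}{8ab}$ to the bound, so the proposition as stated is not fully justified by the paper's argument, and your annotated proof is strictly more careful than the original.
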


\begin{proof}
The assertion follows from Corollary \ref{co 2.7} applied to the convex
mapping $f:\left[ a,b\right] \rightarrow \lbrack 0,\infty ),$ $f(x)=-\ln x.$
\end{proof}

\begin{proposition}
\label{prop 3.4}  Let $a,b\in 
%TCIMACRO{\U{211d} }%
%BeginExpansion
\mathbb{R}
%EndExpansion
^{+},$ $a<b.$ Then for all $q>1,$ we have%
\begin{eqnarray*}
\left\vert L^{-1}\left( a,b\right) -A\left( \frac{4}{3a+b},\frac{4}{a+3b}%
\right) \right\vert  &\leq &\frac{\left( b-a\right) ^{2}}{2^{7+\frac{1}{q}%
}\left( 2p+1\right) ^{\frac{1}{p}}}\left\{ \left( \frac{2}{a^{3q}}+\frac{128%
}{\left( 3a+b\right) ^{3q}}\right) ^{\frac{1}{q}}\right.  \\
&&\left. +2\left( \frac{128}{\left( 3a+b\right) ^{3q}}+\frac{128}{\left(
a+3b\right) ^{3q}}\right) ^{\frac{1}{q}}+\left( \frac{128}{\left(
a+3b\right) ^{3q}}+\frac{2}{b^{3q}}\right) ^{\frac{1}{q}}\right\} .
\end{eqnarray*}
\end{proposition}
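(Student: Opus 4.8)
The plan is to specialize item (2) of Corollary \ref{co 2.8} (the case $x=\frac{3a+b}{4}$ of Theorem \ref{teo 2.2}) to the mapping $f:[a,b]\rightarrow \mathbb{R}$, $f(x)=\frac{1}{x}$, in direct analogy with the way Proposition \ref{prop 3.1} invokes Corollary \ref{co 2.4}. First I would verify the hypotheses. On $[a,b]\subset(0,\infty)$ the function $f$ is smooth, so $f^{\prime}$ is absolutely continuous and $f^{\prime\prime}\in L_{1}[a,b]$; moreover $f^{\prime\prime}(x)=\frac{2}{x^{3}}$, whence $\left\vert f^{\prime\prime}(x)\right\vert^{q}=2^{q}x^{-3q}$, which is convex on $(0,\infty)$ because the exponent $-3q$ satisfies $(-3q)(-3q-1)>0$ for $q>1$. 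Thus the corollary applies for every $q>1$.

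Next I would identify the left-hand side. Since $\frac{1}{b-a}\int_{a}^{b}\frac{dt}{t}=\frac{\ln b-\ln a}{b-a}=L^{-1}(a,b)$ and $\frac{1}{2}\left[f\left(\frac{3a+b}{4}\right)+f\left(\frac{a+3b}{4}\right)\right]=\frac{1}{2}\left(\frac{4}{3a+b}+\frac{4}{a+3b}\right)=A\left(\frac{4}{3a+b},\frac{4}{a+3b}\right)$, the left member of item (2) becomes precisely $\left\vert L^{-1}(a,b)-A\left(\frac{4}{3a+b},\frac{4}{a+3b}\right)\right\vert$, which is exactly the quantity in the claim.

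The remaining work is to evaluate the four second-derivative values that enter the right-hand side and substitute them into the three $(\cdot)^{1/q}$ brackets. I would compute $\left\vert f^{\prime\prime}(a)\right\vert^{q}=2^{q}a^{-3q}$ and $\left\vert f^{\prime\prime}(b)\right\vert^{q}=2^{q}b^{-3q}$, together with $\left\vert f^{\prime\prime}\left(\frac{3a+b}{4}\right)\right\vert^{q}=2^{q}\left(\frac{4}{3a+b}\right)^{3q}$ and the analogous value at $\frac{a+3b}{4}$. Keeping the prefactor $\frac{(b-a)^{2}}{2^{7+1/q}(2p+1)^{1/p}}$ unchanged and inserting these four numbers into the bracketed terms of item (2) then yields the asserted bound.

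The only delicate point is the bookkeeping of the numerical constants. The collapse $2^{q}\cdot 4^{3q}=2^{q}\cdot 2^{6q}=2^{7q}$ must be tracked carefully when simplifying $\left\vert f^{\prime\prime}\left(\frac{3a+b}{4}\right)\right\vert^{q}$, and at this stage I would double-check whether the constants inside the brackets ought to read $2^{q}$ and $2^{7q}$ — the literal outputs of $\left\vert f^{\prime\prime}\right\vert^{q}$ — rather than $2$ and $128$; this is the step most prone to a transcription slip, but it is entirely mechanical once the identification of the two means above has been made.
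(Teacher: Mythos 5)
Your proposal follows exactly the paper's route: the paper's entire proof is the one-line observation that the claim follows from the second inequality of Corollary \ref{co 2.8} applied to the convex mapping $f(x)=\frac{1}{x}$, and you supply the same specialization together with the hypothesis checks and identifications the paper leaves implicit. Your caution about the constants is warranted --- substituting $\left\vert f^{\prime \prime }(x)\right\vert ^{q}=2^{q}x^{-3q}$ literally produces $2^{q}$ and $2^{7q}$ inside the brackets, so the $2$ and $128$ appearing in the stated proposition are a transcription slip in the paper rather than a defect in your argument.
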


\begin{proof}
The assertion follows from second inequality in Corollary \ref{co 2.8}
applied to the convex mapping $f:\left[ a,b\right] \rightarrow 
%TCIMACRO{\U{211d} }%
%BeginExpansion
\mathbb{R}
%EndExpansion
,$ $f(x)=\frac{1}{x}.$
\end{proof}

\end{document}